\newcommand\Z{\mathbb Z}
\newcommand\Q{\mathbb Q}
\newcommand\R{\mathbb R}
\newcommand\Rinf{R_{\infty}}
\newcommand\vp{\varphi}
\newcommand\Op{\overline{\varphi}}
\newcommand\ph{\varphi}
\newcommand\Ai{A_i \bigoplus A_{j-i}}
\newtheorem{theorem}{Theorem}[section]
\newtheorem{proposition}[theorem]{Proposition}
\newtheorem{lemma}[theorem]{Lemma}
\newtheorem{corollary}[theorem]{Corollary}
\newtheorem{example}[theorem]{Example}
\title{The geometry of twisted conjugacy classes in wreath products}
\author{Jennifer Taback and Peter Wong}
\dedicatory{To Robert Zimmer on the occasion of his 60th birthday.}
\address{Department of Mathematics,
Bowdoin College, Brunswick, ME 04011} \email{jtaback@bowdoin.edu}
\thanks{The first author acknowledges support from
NSF grant DMS-0604645, and would like to thank David Fisher and
Kevin Whyte for many useful conversations about this paper, and
Kevin Wortman for comments.}
\address{Department of Mathematics, Bates College, Lewiston, ME 04240} \email{pwong@bates.edu}
\keywords{Reidemeister number, twisted conjugacy class, lamplighter
group, Diestel-Leader graph, wreath product}
\subjclass[2000]{Primary: 20E45; Secondary: 20E08, 20F65, 55M20}
\date{\today}
\begin{document}

\maketitle

\begin{abstract}
We give a geometric proof based on recent work of Eskin, Fisher and
Whyte that the lamplighter group $L_n$ has infinitely many twisted
conjugacy classes for any automorphism $\vp$ only when $n$ is
divisible by 2 or 3, originally proved by Gon\c calves and Wong.  We
determine when the wreath product $G \wr \Z$ has this same property
for several classes of finite groups $G$, including symmetric groups
and some nilpotent groups.
\end{abstract}

\section{Introduction}
We say that a group $G$ has {\em property $\Rinf$} if any
automorphism  $\vp$ of $G$ has an infinite number of $\vp$-twisted
conjugacy classes.  Two elements $g_1,g_2 \in G$ are $\vp$-twisted
conjugate if there is an $h \in G$ so that $hg_1\vp(h)^{-1} = g_2$.
The study of this property is motivated by topological fixed point
theory, and is discussed below.  Groups with property $\Rinf$
include
\begin{enumerate}
\item Baumslag-Solitar groups $BS(m,n) = \langle a,b | ba^mb^{-1} = a^n \rangle$ except for $BS(1,1)$. \cite{FGo}
\item Generalized Baumslag-Solitar (GBS) groups, that is, finitely generated groups which act on a tree with all edge and vertex stabilizers infinite cyclic as well as any group quasi-isometric to a GBS group. \cite{L,TW2}
\item Lamplighter groups $\Z_n \wr \Z$ iff $2|n$ or $3|n$. \cite{GW1}
\item Some groups of the form $\Z^2 \rtimes \Z$. \cite{GW2}
\item Non-elementary Gromov hyperbolic groups. \cite{LL,F}
\item The solvable generalization $\Gamma$ of $BS(1,n)$ which is defined by the short exact sequence
$1 \rightarrow \Z[\frac{1}{n}] \rightarrow \Gamma \rightarrow \Z^k
\rightarrow 1$
 as well as any group quasi-isometric to $\Gamma$. \cite{TW}
\item The mapping class group. \cite{FG}
\item Relatively hyperbolic groups. \cite{FG}
\item Any group with a {\em characteristic} and non-elementary action on a Gromov hyperbolic space.
\cite{TWh,FG}
\end{enumerate}
These results fall into two categories: those which show the
property holds using a group presentation, and those which show that
property $\Rinf$ is geometric in some way, whether invariant under
quasi-isometry, or dependent on an action of a group on a particular
space.  The final statement on the list is the most general, relying
only on the existence of a certain group action. This was proven
independently in \cite{TWh} and in the Appendix of \cite{FG} using similar methods, and provides a wealth
of examples of groups with this property.

The study of the finiteness of the number of twisted conjugacy
classes arises in Nielsen fixed point theory. Given a selfmap
$f:X\to X$ of a compact connected manifold $X$, the nonvanishing of
the classical Lefschetz number $L(f)$ guarantees the existence of
fixed points of $f$. Unfortunately, $L(f)$ yields no information
about the size of the set of fixed points of $f$. However, the
Nielsen number $N(f)$, a more subtle homotopy invariant, provides a
lower bound on the size of this set. For $\dim X\ge 3$, a classical
theorem of Wecken  \cite{Wecken2} asserts that $N(f)$ is a sharp
lower bound on the size of this set, that is, $N(f)$ is the minimal
number of fixed points among all maps homotopic to $f$. Thus the
computation of $N(f)$ is a central issue in fixed point theory.

Given an endomorphism $\ph :\pi \to \pi$ of a group $\pi$, the
$\ph$-twisted conjugacy classes are the orbits of the (left) action
of $\pi$ on $\pi$ via $\sigma \cdot \alpha \mapsto \sigma \alpha
\ph(\sigma)^{-1}$.  Given a selfmap $f:X\to X$ of a compact
connected polyhedron, the fixed point set $Fixf=\{x\in X|f(x)=x\}$
is partitioned into fixed point classes, which are identical to the
$\ph$-twisted conjugacy classes where $\ph=f_{\sharp}$ is the
homomorphism induced by $f$ on the fundamental group $\pi_1(X)$. The
Reidemeister number $R(f)$ is the number of $\ph$-twisted conjugacy
classes, and is an upper bound for $N(f)$.  When $R(f)$ is finite,
this provides additional information about the cardinality of the
set of fixed points of $f$.

For a certain class of spaces, called {\em Jiang spaces}, the
vanishing of the Lefshetz number implies that $N(f) = 0$ as well,
and a nonzero Lefshetz number, combined with a finite Reidemeister
number, implies that $N(f)=R(f)$. As the Reidemeister number is much
easier to calculate than the Lefshetz number, this provides a
valuable tool for computing the cardinality of the set of fixed
points of the map $f$. Jiang's results have been extended to
selfmaps of simply connected spaces, generalized lens spaces,
topological groups, orientable coset spaces of compact connected Lie
groups, nilmanifolds, certain $\mathcal C$-nilpotent spaces where
$\mathcal C$ denotes the class of finite groups, certain
solvmanifolds and infra-homogeneous spaces (see, e.g., \cite{Wo2},
\cite{Wo3}). Groups $G$ which satisfy property $\Rinf$, that is,
every automorphism $\vp$ has $R(\vp) = \infty$, will never be the
fundamental group of a manifold which satisfies the conditions
above.

In this paper, we study the (in)finiteness of the number of
$\ph$-twisted conjugacy classes for automorphisms $\ph:G \wr \Z \to
G \wr \Z$ where $G\wr \mathbb Z$ is the restricted wreath product of
a finite group $G$ with $\mathbb Z$.  When $G = \Z_n$, so that $G
\wr \Z$ is a ``lamplighter" group, we prove that $G \wr \Z$ has
property $\Rinf$ iff $2$ or $3$ divides $n$ using the geometry of
the Cayley graph of these groups.  Our proofs rely on recent work of
A. Eskin, D. Fisher and K. Whyte identifying all quasi-isometries of
$L_n$, and classifying these groups up to quasi-isometry. \cite{EFW}
Our theorem was originally proven in \cite{GW1} using algebraic
methods.  The geometric techniques used below extend to certain
other groups of this form, but combining geometric and algebraic
methods we determine several larger classes of finite groups $G$ for
which $G \wr \Z$ has property $\Rinf$.  In particular, $S_n \wr \Z$
has this property, for $n \geq 5$, yielding the corollary that every
group of the form $G \wr \Z$ can be embedded into a group which has
property $\Rinf$.

We note that property $\Rinf$ is not geometric in the sense that it
is preserved under quasi-isometry.  Namely, $\mathbb Z_4 \wr \mathbb
Z$ is quasi-isometric to $(\mathbb Z_2)^2 \wr \mathbb Z$ by
\cite{EFW} but according to \cite{GW1} the former has property
$R_{\infty}$ while the latter does not.  It also fails to be
geometric when one considers cocompact lattices in $Sol$. Let $A, \
B \in GL(2,\Z)$ be matrices whose traces have absolute value greater
than two. We know that $\Z^2 \rtimes_A \Z$ and $\Z^2 \rtimes_B \Z$
are always quasi-isometric, as they are both cocompact lattices in
$Sol$, but they may not both have property $\Rinf$.  However, there
are classes of groups for which this property is invariant under
quasi-isometry; these include the solvable Baumslag-Solitar groups,
their solvable generalization $\Gamma$ given above and the
generalized Baumslag-Solitar groups.

\eject

\section{Background on twisted conjugacy}
\subsection{Twisted conjugacy}\label{sec:twistedconj}

We say that a group $G$ has {\em property $\Rinf$} if, for any  $\vp
\in Aut(G)$, we have $R( \vp) = \infty$.  The main technique we use
for computing $R(\vp)$ is as follows.  We consider groups which can
be expressed as group extensions, for example $1 \rightarrow A
\rightarrow B \rightarrow C \rightarrow 1$.  Suppose that an
automorphism $\vp \in Aut(B)$ induces the following commutative
diagram, where the vertical arrows are group homomorphisms, that is,
$\ph|_A = \ph'$ and ${\overline \ph}$ is the quotient map induced by
$\ph$ on $C$:

\begin{equation}\label{general-short-exact}
\begin{CD}
    1   @>>> A    @>{i}>>  B @>{p}>>      C @>>> 1 \\
    @.  @V{\ph'}VV      @V{\ph}VV   @V{\overline \ph}VV @.\\
    1   @>>> A    @>{i}>>  B @>{p}>>      C @>>> 1
 \end{CD}
\end{equation}

Then we obtain a short exact sequence of sets and corresponding
functions ${\hat i}$ and ${\hat p}$:
\begin{equation}\label{eqn:seq-of-sets}
\mathcal R(\ph') \stackrel{\hat i}{\to} \mathcal R(\ph)
\stackrel{\hat p}{\to} \mathcal R(\overline \ph)
\end{equation}
where if $\bar 1$ is the identity element in $C$, we have
$\hat{i}(\mathcal R(\ph'))={\hat p}^{-1}([\bar 1])$,  and $\hat p$
is onto. To ensure that both $\varphi'$ and $\overline \ph$ are both
automorphisms, we need the following lemma.  Recall that a group is
{\em Hopfian} if every epimorphism is an automorphism.

\begin{lemma}\label{lemma:auto}
If $C$ is Hopfian, then $\ph'\in Aut(A)$ and $\overline \ph \in Aut(C)$.
\end{lemma}
\begin{proof}
Since $\ph$ is an automorphism, the commutativity of
\eqref{general-short-exact} implies that $\ph'$ is injective and
$\overline \ph$ is surjective. Since $C$ is Hopfian, $\overline \ph
\in Aut(C)$. Suppose $a\in A-\ph'(A)$. It follows that
$\ph^{-1}(i(a))$ is an element of $B-i(A)$ so $p(\ph^{-1}(i(a)))$ is
non-trivial in $C$. This means that $$\overline \ph
(p(\ph^{-1}(i(a))))=p\ph(\ph^{-1}(i(a)))=p(i(a))$$ is non-trivial, a
contradiction. Thus, such an element $a$ cannot exist and so $\ph'$
is also surjective.
\end{proof}

The following result is straightforward and follows from
more general results discussed in \cite{Wo}.

\begin{lemma}\label{lemma:reid}
Given the commutative diagram labeled \eqref{general-short-exact} above,
\begin{enumerate}
\item if $R(\overline \ph)=\infty$ then $R(\ph)=\infty$,

\item if $C$ is finite or $Fix(\Op) = 1$, and $R(\ph')=\infty$ then $R(\ph)=\infty$.
\end{enumerate}
When $C \cong \Z$, we have one of two situations:
\begin{enumerate}
\item[(3)] the map $\Op$ is the Identity, in which case $R(\Op) = \infty$ and hence $R(\vp) = \infty$, or
\item[(4)] $\Op(t) = t^{-1}$ and $R(\Op) = 2$. In this case, $R(\vp) <
\infty$ iff $R(\vp') < \infty$ and $R(t \cdot \vp') < \infty$, in
which case $R(\vp) = R(\vp') + R(t \cdot \vp')$.
\end{enumerate}
\end{lemma}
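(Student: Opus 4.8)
The plan is to push everything through the sequence of sets $\mathcal R(\ph')\stackrel{\hat i}{\to}\mathcal R(\ph)\stackrel{\hat p}{\to}\mathcal R(\overline\ph)$ of \eqref{eqn:seq-of-sets}, using that $\hat p$ is onto and $\hat i(\mathcal R(\ph'))=\hat p^{-1}([\bar 1])$. Part (1) is then immediate: surjectivity of $\hat p$ gives $|\mathcal R(\ph)|\geq|\mathcal R(\overline\ph)|$. For part (2) the difficulty is that $\hat i$ need not be injective, so I would pin down its fibers. If $i(a_1)$ and $i(a_2)$ are $\ph$-twisted conjugate via $b\in B$, applying $p$ to $b\,i(a_1)\,\ph(b)^{-1}=i(a_2)$ gives $p(b)=\overline\ph(p(b))$, so $p(b)\in Fix(\overline\ph)$. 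Conversely, $H:=p^{-1}(Fix(\overline\ph))$ acts on $A$ by $b\cdot a=i^{-1}(b\,i(a)\,\ph(b)^{-1})$ --- the right side lies in $i(A)$ exactly because $p(b)$ is $\overline\ph$-fixed --- and since $i(A)$ is normal in $B$ this action descends to $\mathcal R(\ph')$ with $i(A)$ acting trivially, so $Fix(\overline\ph)\cong H/i(A)$ acts on $\mathcal R(\ph')$ with the fibers of $\hat i$ as its orbits. Hence $|\hat i(\mathcal R(\ph'))|\geq|\mathcal R(\ph')|/|Fix(\overline\ph)|$: if $Fix(\overline\ph)=1$ then $\hat i$ is injective, and if $C$ is finite then each orbit is finite; either way $R(\ph')=\infty$ forces $R(\ph)=\infty$.

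For the statements with $C\cong\Z$, note that the only automorphisms of $\Z$ are the identity and inversion, so cases (3) and (4) are exhaustive. In case (3), $\overline\ph=\mathrm{Id}$, so $R(\overline\ph)$ counts conjugacy classes of the abelian group $\Z$ and is infinite, whence part (1) applies. In case (4), $\overline\ph$ sends $t\mapsto t^{-1}$; on $\Z$ this twisted conjugacy identifies $m_1$ with $m_2$ iff $m_1-m_2$ is even, so $R(\overline\ph)=2$. Since $\Z$ is Hopfian, Lemma~\ref{lemma:auto} makes $\ph'$ an automorphism of $A$, and I would split the extension as $B=A\rtimes\langle t\rangle$ (possible as $\Z$ is free), so every element of $B$ is $at^n$ and $\ph(t)=a_0 t^{-1}$ for some $a_0\in A$. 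Reading off the $\Z$-component of $b\,(at^n)\,\ph(b)^{-1}$ shows it changes only by even integers under $\ph$-twisted conjugacy, so each class has a representative with $n\in\{0,1\}$: those with $n$ even form the fiber $\hat p^{-1}([\bar 1])$ over the identity class, and those with $n$ odd the other fiber. By part (2) (here $Fix(\overline\ph)=1$) the map $\hat i$ is injective, so $|\hat p^{-1}([\bar 1])|=R(\ph')$. A direct computation in $A\rtimes\langle t\rangle$ shows that $a_1t$ and $a_2t$ are $\ph$-twisted conjugate in $B$ precisely when $a_1$ and $a_2$ are twisted conjugate in $A$ under the automorphism $t\cdot\ph':=\tau_t\circ\ph'$, where $\tau_t$ denotes conjugation by $t$ (an automorphism of $A$ since $A\trianglelefteq B$); so the second fiber has cardinality $R(t\cdot\ph')$. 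Adding the two fibers yields $R(\ph)=R(\ph')+R(t\cdot\ph')$, which is finite exactly when both summands are.

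The main obstacle is part (2): everything hinges on the structural fact that any element twisted-conjugating one point of $i(A)$ to another must project into $Fix(\overline\ph)$, which turns the fibers of $\hat i$ into the orbits of a $Fix(\overline\ph)$-action on $\mathcal R(\ph')$ and lets the two hypotheses ($Fix(\overline\ph)$ trivial, or $C$ finite) finish the job. With that in hand, the $C\cong\Z$ computations in part (4) are routine bookkeeping in the semidirect product, including the identification of the nonidentity fiber with $\mathcal R(\tau_t\circ\ph')$.
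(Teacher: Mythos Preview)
Your proof is correct. The paper does not actually prove this lemma---it simply remarks that the result ``is straightforward and follows from more general results discussed in \cite{Wo}''---so there is little to compare against directly; the mechanism you isolate, namely the action of $Fix(\overline\ph)\cong p^{-1}(Fix(\overline\ph))/i(A)$ on $\mathcal R(\ph')$ whose orbits are exactly the fibers of $\hat i$, is the standard one and is in fact the reasoning the paper invokes verbatim later in the proof of Theorem~\ref{general-lamplighter}, while your explicit semidirect-product computation identifying the fiber over $[t]$ with $\mathcal R(\tau_t\circ\ph')$ is exactly how the paper uses part~(4) in the proof of Theorem~\ref{thm:Ln}.
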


To show that a group $G$ does not have property $\Rinf$, it suffices
to produce a single example of $\vp \in Aut(G)$ with $R(\vp) <
\infty$.  When $G$ additionally has a semidirect product structure,
such an automorphism is often constructed as follows.  Write $G = A
\rtimes B$ as $1 \rightarrow A \rightarrow G \rightarrow B
\rightarrow 1$, with $\Theta: B \rightarrow Aut(A)$.  We can use
automorphisms $\vp': A \rightarrow A$ and $\Op: B \rightarrow B$ to
construct the following commutative diagram defining $\vp \in
Aut(G)$, provided that

\begin{equation}\label{eqn:compatibility}
\vp'(\Theta(b)(a)) = \Theta(\Op(b))(\vp'(a))
\end{equation}
 for $b \in B$ and $a
\in A$:
\begin{equation}\label{short-exact}
\begin{CD}
    1   @>>> A    @>{i}>>  G @>{p}>>      B @>>> 1 \\
    @.  @V{\ph'}VV      @V{\ph}VV   @V{\overline \ph}VV @.\\
    1   @>>> A    @>{i}>>  G @>{p}>>      B @>>> 1
 \end{CD}
\end{equation}
Through careful selection of the maps $\vp'$ and $\Op$, we can
sometimes create an example of an automorphism of $G$ with finite
Reidemeister number using the above diagram and Lemma
\ref{lemma:reid}. We should point however that the map $\ph$ so constructed using $\ph'$ and $\overline \ph$ is not unique.

\section{Lamplighter groups and Diestel-Leader graphs}
We begin with a discussion of the geometry of the lamplighter groups
$L_n = \Z_n \wr \Z$ and later address more general groups $G \wr \Z$
where $G$ is any finite group.

The standard presentation of $L_n$ is $\langle a,t | a^n = 1, \
[a^{t^i},a^{t^j}]=1 \rangle$ where $x^y$ denotes the conjugate
$yxy^{-1}$.  Equivalently, we recall that a wreath product is simply
a certain semidirect product, and write $L_n$ as $\left(
\bigoplus_{i=- \infty}^{\infty} A_i\right) \rtimes \Z$, with each
$A_i \cong Z_n$, fitting into the split short exact sequence
$$0 \rightarrow  \bigoplus_{i=- \infty}^{\infty} A_i \rightarrow L_n \rightarrow \Z
\rightarrow 0$$ where the generator of $\Z$ is taken to be  $t$ from
the presentation given above.

The ``lamplighter" picture of elements of this group is the
following.  Take a bi-infinite string of light bulbs placed at
integer points along the real line, each of which has $n$ states
corresponding to the powers of the generator $a$ of $\Z_n$, and a
cursor, or lamplighter, indicating the particular bulb under
consideration.  The action of the group on this picture is such that
$t$ moves the cursor one position to the right, and $a$ changes the
state of the current bulb under consideration.  Thus each element of
$L_n$ can be interpreted as a series of instructions to illuminate a
finite collection of light bulbs to some allowable states, leaving
the lamplighter at a fixed integer.

Understanding group elements via this picture, the generator $a_j =
t^j a t^{-j}$ of $A_j$ in $\bigoplus_{i=- \infty}^{\infty} A_i$ has
a single bulb in position $j$ illuminated to state $a$, and the
cursor at the origin of $\Z$. This leads to two possible normal
forms for $g \in L_n$, as described in \cite{CT}, separating the
word into segments corresponding to bulbs indexed by negative and
non-negative integers:
$$rf(g)=a^{e_{1}}_{i_1} a^{e_{2}}_{i_2} \ldots a^{e_{k}}_{i_k} a^{f_{1}}_{-j_1} a^{f_{2}}_{-j_2} \ldots a^{f_{l}}_{-j_l} t^{m}$$
or
$$lf(g)=a^{f_{1}}_{-j_1} a^{f_{2}}_{-j_2} \ldots a^{f_{l}}_{-j_l} a^{e_{i}}_{i_1} a^{e_{i}}_{i_2} \ldots a^{e_{k}}_{i_k}   t^{m}
$$
with $ i_k > \ldots i_2 > i_1 \geq 0 $ and $ j_l > \ldots j_2 > j_1
> 0 $ and $e_i, \ f_j$ in  the range $\{ -h,\cdots, h \}$,
where $h$ is the integer part of $\frac{n}{2}$.  When $n$ is even,
we omit $a^{-h}$ to ensure uniqueness, since $a^h=a^{-h}$ in
$\Z_{2h}$.  If we consider the normal form as a series of
instructions for acting on the lamplighter picture to create these
group elements, then the normal form $rf(g)$ first illuminates bulbs
at or to the right of the origin, and $lf(g)$ first illuminates the
bulbs to the left of the origin.  It is proven in \cite{CT} that the
word length of $g \in L_n$ with respect to the finite generating set
$\{a,t\}$, using the notation of either normal form given above, is
$$\sum e_{i}+\sum f_{j} + min\{2 j_l+i_k + | m-i_k|, 2
i_k+j_l+|m+j_l|\}.$$

We note that the lamplighter picture of $L_n$ does not yield the
Cayley graph of the group; to obtain a useful and understandable
Cayley graph for this group we introduce Diestel-Leader graphs in \S
\ref{sec:DL} below. However, to use a Diestel-Leader graph as the
Cayley graph of $L_n$, we must consider the generating set $\{ t,ta,
ta^2, \cdots ,ta^{n-1} \}$ for $L_n$.

There is another generating set for $L_2$ worth noting. R.
Grigorchuk and A. Zuk in \cite{GZ} show that this group is an
example of an automata group.  The natural generating set arising
from this interpretation is $\{a,ta\}$.  Grigorchuk and Zuk compute
the spectral radius with respect to this generating set and find
that it is a discrete measure.

\subsection{Diestel-Leader graphs}\label{sec:DL}

We now describe explicitly the Cayley graph for the lamplighter
group $L_m$ with respect to one particular generating set.  This
Cayley graph is an example of a Diestel-Leader graph, which we
define in full generality as follows.

For positive integers $m \leq n$, the Diestel-Leader graph $DL(m,n)$
is a subset of the product of the regular trees of valence $m+1$ and
$n+1$, which we denote respectively as $T_1$ and $T_2$.  We orient
these trees so that each vertex has $m$ (resp. $n$) outgoing edges.
By fixing a basepoint, we can use this orientation to define a
height function $h_i:T_i \rightarrow \Z$.  The Diestel-Leader graph
$DL(m,n)$ is defined to be the subset of $T_1 \times T_2$ for which
$h_1+h_2 = 0$.  This definition lends itself to the following
pictorial representation. Each tree has a distinguished point at
infinity which determines the height function.  If we place these
points for the two trees at opposite ends of a page, the
Diestel-Leader graph can be seen as those pairs of points, one from
each tree, lying on the same horizontal line.  See Figure
\ref{fig:DL33} for a portion of the graph $DL(3,3)$.

\begin{figure}[ht!]
  \includegraphics[scale=.25]{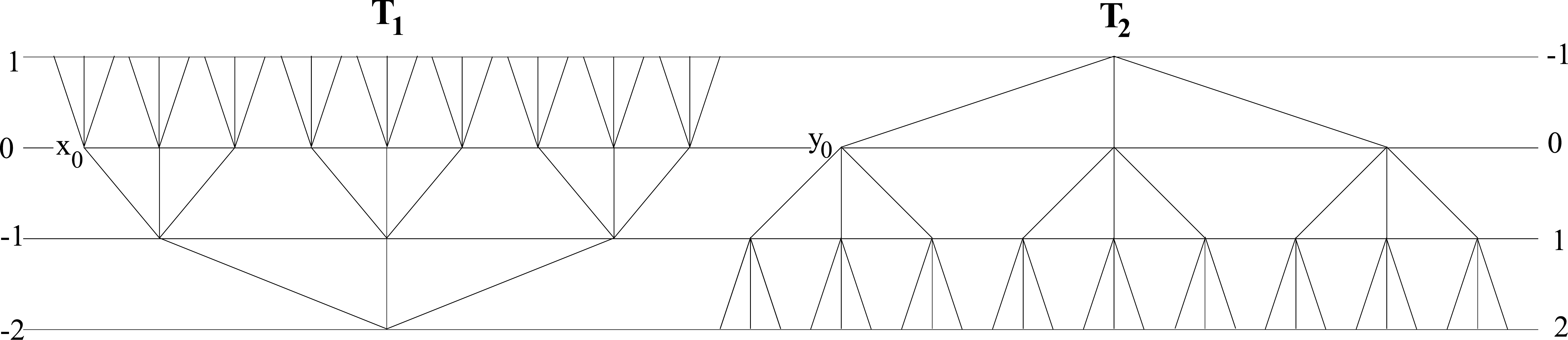}
  \caption{Part of the Cayley graph $DL(3,3)$ of $L_3$ with the point $Id = (x_0,y_0)$ labeled.
  The integers on the edges of the diagram represent the height in each tree.  The point $(x_0,y_0)$
  represents the identity element of the group.}
  \label{fig:DL33}
\end{figure}

When $n=m$, we will see that $DL(m,m)$ is the Cayley graph of the
lamplighter group $L_m$ with respect to a particular generating set.
This fact was first noted by Moeller and Neumann \cite{MN}.  Both
Woess \cite{Woe} and Wortman \cite{Wor} describe slightly different
methods of understanding this model of the lamplighter groups; our
explanation is concrete in a different way than either of theirs.

The group $L_m$ is often presented by $L_m \cong \langle a,t | a^n =
1, \ [a^{t^i},a^{t^j}]=1 \rangle$.  However, the Cayley graph
resulting from this presentation is rather untractable.  When we
take the generating set $\{t,ta,ta^2, \cdots ,ta^{m-1} \}$ we obtain
the Cayley graph $DL(m,m)$ for $L_m$.  Thus a group element $g \in
L_m$ corresponds to a pair of points $(c,B)$ where $c \in T_1$ and
$B \in T_2$ so that $h_1(c) + h_2(B) = 0$.  Our notational
convention will be to use lower case letters for vertices in $T_1$
and capital letters for vertices in $T_2$, with the exception of the
identity, which we always denote $(x_0,y_0)$.

The action of the group $L_m$ on $DL(m,m)$ is as follows:
\begin{enumerate}
\item $t$ translates up in height in $T_1$ and down in height in
$T_2$, so that the condition $h_1 + h_2 = 0$ is preserved, and
\item if $(v,W)$ is a point in $DL(m,m)$, let $v_0,v_1, \cdots ,v_{m-1}$ be the
vertices in $T_1$ adjacent to $v$ with $h_1(v_i) = h_1(v) + 1$. Then
$a$ performs a cyclic rotation among these vertices, so that $a
\cdot (v_i,W)  = (v_{(i+1)(\text{mod} \ m)},W)$. Conjugates of $a$
by powers of $t$ perform analogous rotations on the vertices with a
common parent in $T_2$ while fixing the coordinate in $T_1$.
\end{enumerate}

We now describe how to identify group elements with vertices of
$DL(m,m)$. Express $g \in L_m$ using the generating set
$\{t,ta,ta^2, \cdots ,ta^{m-1} \}$ and inverses of these elements.
For example, let $m=3$ and take $a=t^{-1}(ta)$.  We describe a path
in $DL(m,m)$ from the identity to $a$, using the expression
$a=t^{-1}(ta)$ and the action of the group on the graph.  We
introduce coordinates on the trees as needed, but always designate
the identity as $(x_0,y_0)$. From a given vertex $(x,Y) \in
DL(m,m)$, there are edges emanating from this vertex labeled by all
these generators and their inverses. The edge labeled by a generator
of the form $ta^i$ for $i=0,1, \cdots ,m-1$ leads to a point
$(x',Y')$ with $h_1(x') = h_1(x) + 1$ and $h_2(Y') = h_2(Y)-1$, and
the edge labeled by a generator of the form $(ta^i)^{-1}$ leads to a
point $(x',Y')$ with $h_1(x') = h_1(x) - 1$ and $h_2(Y') =
h_2(Y)+1$.

Notice that when leaving $(x,Y)$ via an edge labeled $(ta^i)^{-1}$,
there is only one possible vertex in $T_1$ adjacent to the vertex
$x$ at lower height.  When tracing out a path from the identity to a
particular element, we must remember which path was taken when it
seems like several generators correspond to traversing the same edge
in one of the two trees.  This will be illustrated in an example
below.

We begin by finding the group element $a = t^{-1}(ta)$ as a vertex
in $DL(3,3)$. We read the generators from left to right as a series
of instructions for forming a path in $DL(3,3)$ from the identity to
$a$.  Using the group action on $DL(3,3)$ described above, the
generator $t$ always increases the height function in $T_1$ while
decreasing it in $T_2$, and the action of $a$ (resp. $a^{-1}$) is to
cyclically rotate the edge we traverse in $T_1$ (resp. $T_2$).

For example, in $DL(3,3)$ we trace a path to $a = t^{-1}(ta)$ in
each tree as follows.  The edge emanating from $Id = (x_0,y_0)$
labeled $t^{-1}$ must decrease the height in $T_1$ and increase it
in $T_2$;  This determines a vertex $x'$ in $T_1$ and $Y'$ in $T_2$.
The edge emanating from $(x',Y')$ labeled by $ta$ must increase the
height function in $T_1$ and decrease it in $T_2$.  There is a
unique vertex in $T_2$ satisfying the conditions necessary for a
coordinate in the terminus of this edge, namely $y_0$.  In $T_1$,
since the initial edge was labeled $t^{-1}$, we do not traverse up
that edge, but rather the cyclically adjacent edge, ending at a
point whose coordinates we label $(x_1,y_0)$.  Using the coordinates
in Figure \ref{fig:DL33a} below, we see that $a^2 = t^{-1}(ta^2) =
(x_2,y_0)$.

\begin{figure}[h!]
  \includegraphics[scale=.3]{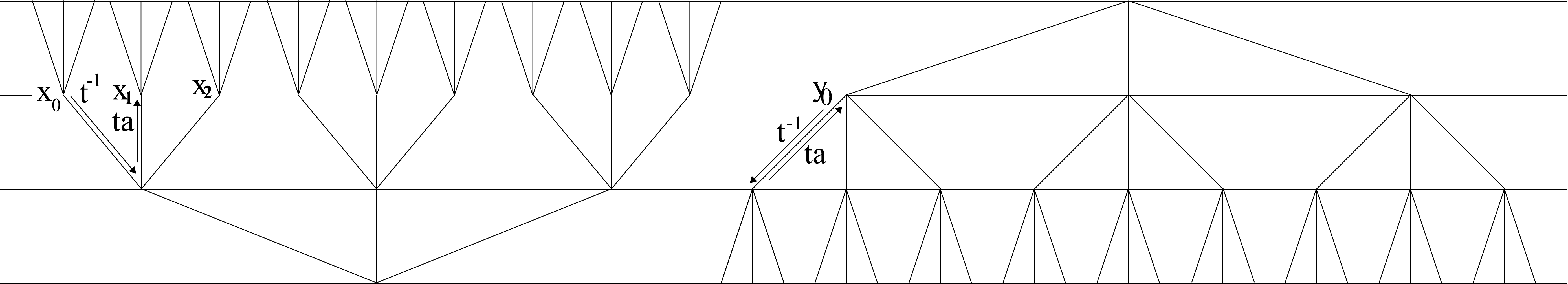}
  \caption{The path in the Cayley graph $DL(3,3)$ of $L_3$ from $Id = (x_0,y_0)$
  to the group element $a = t^{-1}(ta)$.}
  \label{fig:DL33a}
\end{figure}

For a more involved example, consider the group elements $t^{-1}
(ta^2)^{-1} t(ta)$ and $(ta)^{-1} (ta^2)^{-1} t(ta)$.  In Figure
\ref{fig:2paths}, we trace out the paths in $T_1$ leading from $x_0$
to the first coordinate of these two elements in $DL(3,3)$.  This
example illustrates the cyclic labeling of edges determined by the
downward path of edges and the action of $a$ on the graph. It is not
hard to see that the $T_2$ coordinate of both of these group
elements is $y_0$.

\begin{figure}[h!]
  \includegraphics[scale=.55]{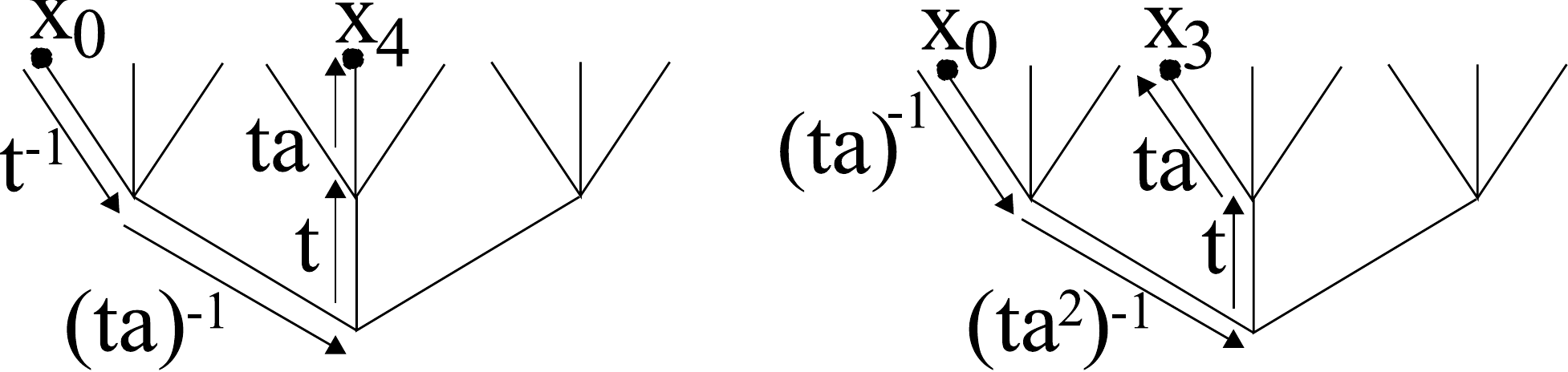}
  \caption{The $T_1$ coordinates of the paths in $DL(3,3)$ leading from the identity $(x_0,y_0)$
  to the group elements $t^{-1}(ta^2)^{-1} t(ta)$ and $(ta)^{-1} (ta^2)^{-1} t(ta)$, respectively.  The
  $T_2$ coordinates of the final point in each path is $y_0$.}
  \label{fig:2paths}
\end{figure}

Writing $L_m$ as a group extension: \begin{equation}\label{eqn:Ln}0
\rightarrow \bigoplus_{i=- \infty}^{\infty} A_i \rightarrow L_n
\rightarrow \Z \rightarrow 0\end{equation} where $A_i \cong \Z_m$,
we see that the map onto $\Z$ is determined by the exponent sum on
all instances of the generator $t$ in any word representing $g \in
L_m$, using the generating set $\{t,a\}$. Thus the kernel of this
map is simply those group elements in which there are equal numbers
of $t$ and $t^{-1}$.  It is easy to see that when we change to the
generating set $\{t,ta,\cdots ,ta^{n-1}\}$ these elements can be
characterized in the same manner.  Thus in $DL(m,m)$ the points
corresponding to the group elements in the kernel of this map again
have coordinates with height $0$ in both trees.

To understand this picture more thoroughly, we identify in $DL(m,m)$
the points corresponding to the different factors of $A_i \cong
\Z_m$. First suppose that $i \leq 0$.  The points in $A_i$ all have
the form $t^{-i} a^k t^i$ with respect to the $\{t,a\}$ generating
set for $L_m$, for $0 \leq k \leq m-1$, and $t^{-(i+1)} (ta^k) t^i$
with respect to the $\{t,ta,ta^2, \cdots ,ta^{m-1} \}$ generating
set. Thus they are easy to find in $DL(m,m)$: the coordinate in
$T_2$ will be $y_0$, and in $T_1$, follow the unique path which
takes $i+1$ edges decreasing in height, cyclically rotate over $k$
edges, proceed up in height one edge, then regardless of $k$,
proceed up the identical path in each subtree labeled by $t$ at each
step until you reach height zero.  The factors of $A_i$ for $i > 0$
are reversed in that points in such a factor all have the first
coordinate in $T_1$ equal to $x_0$ and the second coordinate is
found in $T_2$ in an analogous manner.

\begin{center}
\begin{figure}[h!]
  \includegraphics[scale=.35]{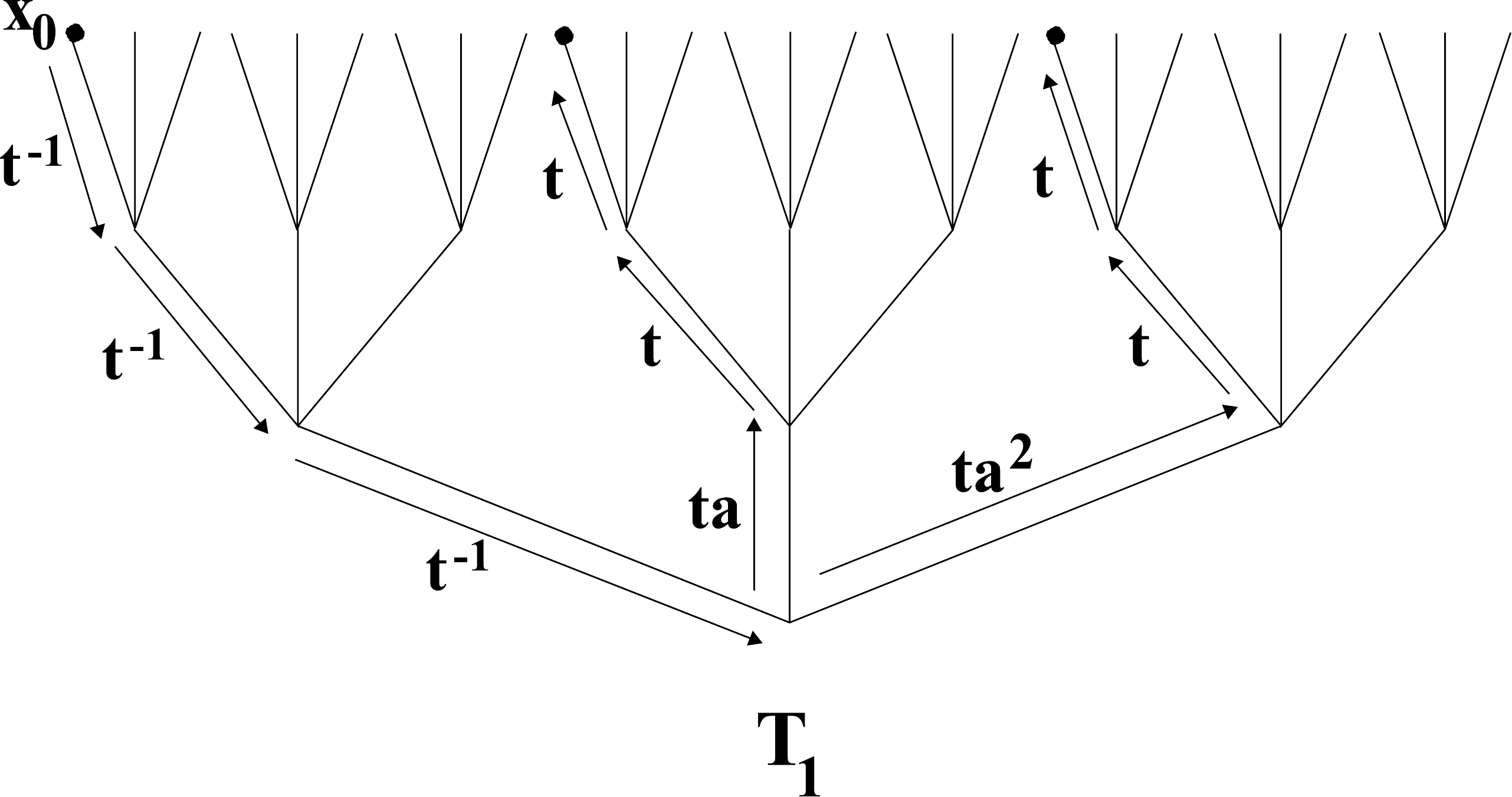}
  \caption{Paths from the identity to the $T_1$ coordinates of the points
  of $A_3$ in $\bigoplus_{i=-\infty}^{\infty} A_i$ where $A_i \cong \Z_3$ for all $i$.}
  \label{fig:block}
\end{figure}
\end{center}

Using the lamplighter picture to understand the group, a natural
subset to identify is the collection of elements with bulbs
illuminated (to any state) within a given set of positions, say
between $-i$ and zero, for some $i \geq 0$.  The situation we
describe below is completely analogous for $i <0$, with the two
trees interchanged. Let $l$ be the line in $DL(m,m)$ which is the
orbit of the identity under the group generator $t$.  Viewing $T_1$
and $T_2$ simply as trees, and not as part of $DL(m,m)$, the line
$l$ in $DL(m,m)$ determines a line $l_i \subset T_i$ for each $i$,
for $i=1,2$.

When $-i \leq 0$, to determine this set of points, we recall that in
the generating set corresponding to $DL(m,m)$, any instance of a
generator $(ta^k)^{\pm 1}$ for $k$ strictly greater than zero will
illuminate a bulb in the lamplighter picture for the group.  The
position of this bulb is determined by the exponent sum on all
instances of the generator $t$, whether as part of $(ta^k)$ or not,
within the word up until the string $a^k$ is read.  Thus all group
elements of this form can be expressed as follows:
\begin{equation}\label{eqn:lightson} t^{-(i+1)} \Pi_{j=1}^{j+1} \alpha_j\end{equation}
 where $\alpha_i $ or
$\alpha_i^{-1}$ lies in $\{t,ta,ta^2, \cdots ,ta^{m-1} \}$.  It is
not hard to see that all of these group elements have $y_0$ for
their $T_2$ coordinate. When $i>0$, the points will share $x_0$ as
their $T_1$ coordinate. Returning to $i \leq 0$, we identify the
$T_1$ coordinates of these points as follows.  Let $x_{i+1}$ be the
point on $l_1$ at height $-(i+1)$. Then the $3^{i+1}$ points which
can be reached in $T_1$ by a path of length $i+1$ which increases in
height at each step are precisely all the points of the form given
in (\ref{eqn:lightson}).  The points at height zero in the tree
$T_1$ which are shown in Figure \ref{fig:DL33}, when paired with
$y_0 \in T_2$, give the complete collection of elements of $L_3$
with bulbs illuminated in all combinations of positions between $-2$
and zero, inclusive.

\subsection{Results of A. Eskin, D. Fisher and K. Whyte}
In our proofs below, we rely on results of A. Eskin, D. Fisher and
K. Whyte concerning the classification of the Diestel-Leader graphs
up to quasi-isometry. \cite{EFW} These results are relevant since
any automorphism of $\Z_n \wr \Z$ can be viewed as a quasi-isometry
of the Cayley graph $DL(n,n)$.

Introduce the following coordinates on $DL(m,n)$: a point in
$DL(m,n)$ can be uniquely identified by the triple $(x,y,z)$ where
$x \in \Q_m, \ y \in \Q_n$ and $ z \in \Z$.  Here, $\Q_m$ denotes
the $m$-adic rationals, which we view as the ends of the tree $T_1$
once the height function has been fixed.  The $n$-adic rationals
correspond in an analogous way to the ends of $T_2$.  Restricting to
$T_1$, any $ x \in \Q_m$ determines a unique line in $T_1$ on which
the height function is strictly increasing.  This line has a unique
point at height $z$.  Similarly, the line in $T_2$ determined by $y
\in \Q_n$ contains a unique point at height $-z$.  This pair of
points together forms a single point in $DL(m,n)$ that is uniquely
identified in this way.

Following \cite{EFW}, we define a {\em product map} $\hat{\vp}:
DL(m,n) \rightarrow DL(m',n')$ as follows. Namely, $\hat{\vp}$ is a
product map if it is within a bounded distance of a map of the form
$(x,y,z) \mapsto (f(x),g(y),q(z))$ or $(x,y,z) \mapsto
(g(y),f(x),q(z))$ where $f:\Q_m \rightarrow \Q_{m'}$ (or $\Q_{n'}$),
$g:\Q_n \rightarrow \Q_{n'}$ (or $\Q_{m'}$) and $q: \R \rightarrow
\R$.

A product map is called {\em standard} if it is the composition an
isometry and a product map in which $q$ is the identity, and $f$ and
$g$ are bilipshitz.

They determine the form of any quasi-isometry between Diestel-Leader
graphs in the following theorem, which is stated in \cite{EFW} and
proven there for $m \neq n$, and proven in the remaining case in
\cite{EFW2}.
\begin{theorem}[\cite{EFW}, Theorem 2.3]
 For any $m\leq n$, any ($K,C$)-quasi-isometry $\vp$ from $DL(m, n)$
to $DL(m', n')$ is within bounded distance of a height respecting
quasi-isometry $\vp'$. Furthermore, the bound is uniform in $K$ and
$C$.
\end{theorem}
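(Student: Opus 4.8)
This is the central quasi-isometric rigidity theorem of \cite{EFW} (with the remaining case $m=n$, the one relevant to lamplighter groups, completed in \cite{EFW2}); a full proof occupies those papers, so what follows is only a plan, built around the method of \emph{coarse differentiation}, in close analogy with the rigidity of $Sol$.

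I would begin by recording the coarse geometry of $DL(m,n)$ in the $(x,y,z)$ coordinates introduced above, writing $h$ for the height function ($h=h_1=-h_2$). The map $h\colon DL(m,n)\to\Z$ exhibits $DL(m,n)$ as a ``horocyclic product'': forgetting $x$ gives a copy of the tree $T_2$ sheared along $\R$, forgetting $y$ a copy of $T_1$, and a bi-infinite geodesic on which $h$ restricts to an isometry of $\R$ is, up to bounded error, a vertical line $\ell_{x_0,y_0}=\{(x_0,y_0,z):z\in\Z\}$. These vertical lines carry the whole structure: two of them sharing the coordinate $y_0$ fellow-travel for all sufficiently negative heights and branch apart as $z\to+\infty$, with branching governed by $m$, and symmetrically two sharing $x_0$ branch apart as $z\to-\infty$ with branching governed by $n$. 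Consequently the exponential growth rate of a ball in the ``up'' direction is $\log m$ and in the ``down'' direction is $\log n$; these rates, together with the relation $h_1+h_2=0$, are quasi-isometry invariants, so already the existence of a quasi-isometry $DL(m,n)\to DL(m',n')$ forces $\{\log m,\log n\}$ and $\{\log m',\log n'\}$ to be proportional.

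The technical heart is the \emph{coarse differentiation lemma}: given $\varepsilon>0$ and quasi-isometry constants $(K,C)$ there is $N=N(\varepsilon,K,C,m,n,m',n')$ so that, for all but $N$ dyadic scales $r$ and for ``most'' basepoints, the restriction of $\varphi$ to a box of height-extent $r$ is, after rescaling the target, within $\varepsilon r$ of a \emph{standard} product map. I would prove this by examining the images $\varphi\circ\ell$ of vertical lines. Each is a $(K,C)$-quasigeodesic, and a path in $DL(m,n)$ that rises and then returns in height must be proportionally long, so $\varphi\circ\ell$ cannot backtrack in height arbitrarily relative to its length; a multi-scale pigeonholing of the turning behaviour over a geometric sequence of scales — the hallmark of coarse differentiation — then forces $\varphi\circ\ell$ to be coarsely height-monotone at all but $N$ scales. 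Running this simultaneously over the two complementary tree foliations of $DL(m,n)$ (a Fubini-type step: most boxes, in most directions) upgrades coarse monotonicity on geodesics to $\varphi$ being coarsely a product map on most boxes at most scales. The case $m=n$ is harder precisely because the $m\neq n$ argument exploits the asymmetry $\log m\neq\log n$ between the up- and down-growth rates, which evaporates when $m=n$, and \cite{EFW2} supplies the substitute.

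It remains to globalize, to pin down the height behaviour, and to flag the obstacle. Since the ``standard on most boxes'' conclusion holds at every scale and the two foliations are translation-invariant, the local product descriptions are forced to agree on overlaps and assemble into a single map $\hat\varphi\colon(x,y,z)\mapsto(f(x),g(y),q(z))$ (or the coordinate-swapped form) within bounded distance of $\varphi$, with $f\colon\Q_m\to\Q_{m'}$ and $g\colon\Q_n\to\Q_{n'}$ bilipschitz; comparing the exponential divergence rates of vertical lines in source and target forces $q$ to be affine, and matching growth rates together with $h_1+h_2=0$ forces its slope to be $\pm1$ when $m=n$ and in general the constant dictated by the invariants above. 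Replacing $q$ by that affine map and absorbing the error into $f$ and $g$ yields a height-respecting quasi-isometry $\varphi'$ — meaning $\varphi'$ commutes with the height functions up to an affine map of $\R$ and bounded error — within a bounded distance of $\varphi$; the uniformity of that bound in $K$ and $C$ is exactly the quantitative content of the coarse differentiation lemma. The one serious obstacle is that lemma itself: controlling, uniformly across all scales at once, how far the image of a vertical geodesic can deviate from height-monotonicity. Everything else is bookkeeping with the tree combinatorics, but this step must genuinely exploit the horocyclic-product geometry — exponential branching in both height directions, yet, unlike an honest product of two trees, no room for a quasigeodesic to sustain backtracking — and it is the technical core of \cite{EFW,EFW2}.
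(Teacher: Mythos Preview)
The paper does not prove this theorem at all: it is quoted verbatim as Theorem~2.3 of \cite{EFW}, with the remark that the case $m\neq n$ is proven in \cite{EFW} and the case $m=n$ in \cite{EFW2}, and it is then used as a black box. So there is no ``paper's own proof'' to compare against; the paper simply cites the result.

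Your proposal is therefore not so much a proof of the statement as a reasonable high-level summary of the Eskin--Fisher--Whyte coarse differentiation strategy that underlies \cite{EFW,EFW2}. As such it is appropriate in spirit and correctly identifies the main ingredients (vertical geodesics, the coarse differentiation lemma controlling height-monotonicity of their images across scales, assembly into a product map, and the extra difficulty when $m=n$). But you should be aware that in the context of this paper no argument is expected or given: the authors treat the theorem as an imported tool, and a reader would be directed to \cite{EFW,EFW2} for the actual proof. If the intent was to reproduce what the paper does, the correct ``proof'' here is a one-line citation.
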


Below, we use the fact that any height-respecting quasi-isometry is
at a bounded distance from a standard map. Eskin, Fisher and Whyte
describe the group of standard maps of $DL(m,n)$ to itself; this
group is isomorphic to $(Bilip(\Q_m)\times Bilip(\Q_n)) \rtimes
\Z/2\Z$ when $m = n$ and $(Bilip(\Q_m)\times Bilip(\Q_n))$
otherwise. They deduce that this is the quasi-isometry group of
$DL(m,n)$.  The extra factor of $\Z_2$ in the case $m=n$ reflects
the fact that the trees in this case may be interchanged.

This result has the following implications for our work below.  The
product map structure implies that a quasi-isometry takes a line in
$T_1$ and maps it to within a uniformly bounded distance of a line
in either $T_1$ or $T_2$, depending on whether the quasi-isometry
interchanges the two tree factors or not.  The same is true when we
begin with a line in $T_2$.  This is a geometric fact not dependent
on the group structure, and we use it repeatedly below.

\section{Counting twisted conjugacy classes in $L_n = \Z_n \wr \Z$}
\label{sec:lamplighter}

In this section, we present a geometric proof of the following
theorem, first proven in \cite{GW1}.  We follow the outline of the
lemmas in \cite{GW1}, providing proofs based on the geometry of the
group and the results of A. Eskin, D. Fisher and K. Whyte in
\cite{EFW}.

\begin{theorem}[\cite{GW1}, Theorem 2.3]
\label{thm:Ln} Let $m \geq 2$ be a positive integer with its prime
decomposition
$$m = 2^{e_1}3^{e_2} \Pi_{i} p_i^{e_i}$$
where each $p_i$ is a prime greater than or equal to $5$.  Then
there exists an automorphism $\vp$ of $\Z_m \wr \Z$ with $R(\vp) <
\infty$ if and only if $e_1 = e_2 = 0$.
\end{theorem}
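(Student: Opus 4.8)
The plan is to push everything down the extension $1\to A\to L_m\to\Z\to1$, where $A=\bigoplus_{i\in\Z}A_i$ is the subgroup of torsion elements of $L_m$ (any element with nonzero $t$-exponent has infinite order), hence is characteristic, and which I identify with the group ring $\Z_m[\Z]=\Z_m[s^{\pm1}]$, the conjugation action of $t$ becoming multiplication by $s$. Any $\varphi\in\mathrm{Aut}(L_m)$ satisfies $\varphi(A)=A$ and so induces the diagram \eqref{general-short-exact} with $C=\Z$; since $\Z$ is Hopfian, Lemma \ref{lemma:auto} gives $\varphi'\in\mathrm{Aut}(A)$ and $\overline\varphi\in\mathrm{Aut}(\Z)=\{\pm1\}$. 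If $\overline\varphi=\mathrm{id}$ then $R(\varphi)=\infty$ by Lemma \ref{lemma:reid}(3); so I may assume $\overline\varphi(t)=t^{-1}$, and then Lemma \ref{lemma:reid}(4) reduces the theorem to deciding when $R(\varphi')$ and $R(t\cdot\varphi')$ are both finite (with $R(\varphi)=R(\varphi')+R(t\cdot\varphi')$ in that case).

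Next I would determine the admissible $\varphi'$. Geometrically this is where \cite{EFW} enters: $\varphi$ is a quasi-isometry of $DL(m,m)$ fixing the identity, hence within bounded distance of a standard height-respecting product map, and $\overline\varphi(t)=t^{-1}$ is precisely the condition that this product map interchange the two tree factors $T_1,T_2$. Unwinding the interchange --- or, equivalently, arguing directly from $\varphi(tat^{-1})=\varphi(t)\varphi(a)\varphi(t)^{-1}$ and $A$ being abelian --- shows $\varphi'$ intertwines the shift $S=\Theta(t)$ with $S^{-1}$, i.e.\ $\varphi' S=S^{-1}\varphi'$. Being $\Z_m$-linear and satisfying this relation, $\varphi'(x)=\varphi'(1)\cdot\overline x$, where $\overline x$ is obtained from $x\in\Z_m[s^{\pm1}]$ by replacing $s$ with $s^{-1}$, and bijectivity forces $p:=\varphi'(1)$ to be a unit of $\Z_m[s^{\pm1}]$; conversely every unit $p$ yields an admissible $\varphi'$. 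The non-uniqueness of $\varphi$ flagged after \eqref{short-exact} is harmless, since by Lemma \ref{lemma:reid}(4) the value $R(\varphi)$ depends only on $\varphi'$ and on the sign $\overline\varphi(t)=t^{-1}$. Since $A$ is abelian, $R(\varphi')=|\mathrm{coker}(\mathrm{id}-\varphi')|$ and $R(t\cdot\varphi')=|\mathrm{coker}(\mathrm{id}-S\varphi')|$, where $(S\varphi')(x)=s\,p(s)\,\overline x$ and a cardinality is $\infty$ when the cokernel is infinite.

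It then remains to evaluate these two cokernels over $\Z_m[s^{\pm1}]$. Suppose first $e_1=e_2=0$; by the Chinese Remainder Theorem pick $r\in\Z_m^\times$ with $r\not\equiv\pm1\pmod{p_i}$ for every $i$ --- possible exactly because $|\mathbb F_{p_i}^\times|=p_i-1\ge4$ leaves room outside $\{\pm1\}$ --- so that $\gcd(r^2-1,m)=1$, and take $\varphi(t)=t^{-1}$, $\varphi(a)=a^r$, giving $p(s)=r$. Grouping the monomials $s^i$ into the orbits $\{s^i,s^{-i}\}$ of $x\mapsto\overline x$ (and $\{s^i,s^{1-i}\}$ for $S\varphi'$), the operator $\mathrm{id}-\varphi'$ (resp.\ $\mathrm{id}-S\varphi'$) acts on each two-element orbit by the matrix $\left(\begin{smallmatrix}1&-r\\-r&1\end{smallmatrix}\right)$, of determinant $1-r^2$ --- a unit mod $m$ --- and on the single fixed monomial $s^0$ (which occurs only for $\mathrm{id}-\varphi'$) by multiplication by the unit $1-r$; hence both cokernels vanish, $R(\varphi)=2<\infty$, and $L_m$ lacks $\Rinf$. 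Conversely, suppose $p\mid m$ for some $p\in\{2,3\}$ and let $\varphi$ be any automorphism with $\overline\varphi(t)=t^{-1}$, so $\varphi'(x)=p_0(s)\overline x$ for a unit $p_0$. Reducing modulo $p$, $p_0$ becomes a unit $c\,s^\ell$ of $\mathbb F_p[s^{\pm1}]$ with $c\in\mathbb F_p^\times\subseteq\{\pm1\}$, so $\mathrm{id}-\varphi'$ becomes $s^i\mapsto s^i-c\,s^{\ell-i}$; on each two-element orbit $\{i,\ell-i\}$ of the reflection $i\mapsto\ell-i$ this is the matrix $\left(\begin{smallmatrix}1&-c\\-c&1\end{smallmatrix}\right)$, whose determinant $1-c^2$ vanishes because $c^2=1$, so --- there being infinitely many such orbits --- $\mathrm{coker}(\mathrm{id}-\varphi')$ is infinite over $\mathbb F_p$, hence over $\Z_m$ (right exactness of reduction mod $p$). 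Thus $R(\varphi')=\infty$, so $R(\varphi)=\infty$ by Lemma \ref{lemma:reid}(4); with the $\overline\varphi=\mathrm{id}$ case this shows $L_m$ has $\Rinf$.

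I expect the main obstacle to be the converse: it must control \emph{every} automorphism, and its substance is the reduction of an arbitrary $\varphi$ with $\overline\varphi(t)=t^{-1}$ to the normal form $\varphi'(x)=p(s)\overline x$ --- the place where the geometric classification of \cite{EFW} (product maps, tree interchange) does the conceptual work, and where one must take care passing from ``within bounded distance of a standard map'' to an exact algebraic identity. Once that normal form is in hand, the dichotomy is the elementary observation that $\mathbb F_2$ and $\mathbb F_3$ have too few units to evade the reflection $i\mapsto\ell-i$: every unit there is $\pm s^\ell$, and $(\pm1)^2=1$ makes the two-by-two blocks of $\mathrm{id}-\varphi'$ singular, whereas for $p\ge5$ the group $\mathbb F_p^\times$ has room for a scalar $r$ with $r^2\ne1$, which is exactly the input that makes those blocks invertible. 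Minor points to nail down along the way: that $A$ is genuinely characteristic, and that Lemma \ref{lemma:reid}(4) really does render $R(\varphi)$ independent of all the choices made in building $\varphi$ from $\varphi'$.
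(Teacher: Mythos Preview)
Your argument is correct, but it is not the paper's argument: you have essentially rediscovered the algebraic proof of \cite{GW1}, whereas the paper's stated purpose is to give a \emph{geometric} proof via the Diestel--Leader model and the quasi-isometry classification of \cite{EFW}. Concretely, the paper shows $\bigoplus A_i$ is characteristic by tracking heights in $DL(n,n)$ (Lemma~\ref{lemma:characteristic}), whereas you use the cleaner observation that it is the torsion subgroup; the paper derives a ``block'' decomposition $\vp':A_i\oplus A_{j-i}\to A_i\oplus A_{j-i}$ from the product-map structure of \cite{EFW} (together with Lemma~\ref{lemma:ai}), whereas you obtain the global normal form $\vp'(x)=p(s)\,\overline{x}$ purely from the intertwining relation $\vp' S=S^{-1}\vp'$; for $n=2,3$ the paper exhibits explicit fixed points on blocks using the coordinate functions $f_1,f_2$ coming from \cite{EFW}, while you compute $\det\bigl(\begin{smallmatrix}1&-c\\-c&1\end{smallmatrix}\bigr)=1-c^2=0$ over $\mathbb F_p$; and for composite $m$ with $2\mid m$ or $3\mid m$ the paper passes to a characteristic \emph{subgroup} $H_i\cong L_i$ (Lemma~\ref{lemma:characteristic2}), whereas you pass to the \emph{quotient} $\mathbb F_p[s^{\pm1}]$ via right-exactness. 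Your route is shorter and more uniform; the paper's route buys an illustration of how \cite{EFW} constrains automorphisms. One remark: despite your closing paragraph, your own derivation of the normal form nowhere actually uses \cite{EFW} --- the relation $\vp' S=S^{-1}\vp'$ is immediate from $\vp(tat^{-1})=\vp(t)\vp(a)\vp(t)^{-1}$ and the abelianness of $A$, so the ``main obstacle'' you flag is already handled by elementary algebra.
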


Let $\vp \in Aut(L_n)$. We first must show that the infinite direct
sum $\bigoplus A_i$ where each $A_i \cong \Z_n$ is characteristic in
$L_n$, regardless of whether $(n,6)=1$, allowing us to obtain
``vertical" maps on the short exact sequence defining $L_n$ given in
Equation \ref{eqn:Ln}. Let $DL(n,n)$ be the Cayley graph of $L_n$
with respect to the generating set $\{t,ta,ta^2, \cdots ,ta^{n-1}
\}$. We label the identity using the coordinates $(x_0,y_0)$ and
label other points as needed.  In general, we use lower case letters
for coordinates in the first tree $T_1$, and capital letters to
denote coordinates in the second tree $T_2$.

\begin{lemma}  \label{lemma:characteristic}
Let $\vp \in Aut(L_n)$.  Then $\vp(\bigoplus A_i) \subset \bigoplus
A_i$.
\end{lemma}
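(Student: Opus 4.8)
The plan is to use the identification of $L_n$ with the Cayley graph $DL(n,n)$ and the structural results of Eskin–Fisher–Whyte. The key observation is that the subgroup $\bigoplus A_i$ consists exactly of those group elements whose vertices in $DL(n,n)$ have height $0$ in both trees, as noted in the discussion surrounding Equation \eqref{eqn:Ln}; equivalently, these are the elements in the kernel of the projection $L_n \to \Z$. So the goal is to show that any automorphism $\vp$ preserves this ``height zero'' locus. Since $\vp$ is a quasi-isometry of $DL(n,n)$, by Theorem 2.3 of \cite{EFW} it is within bounded distance of a height-respecting quasi-isometry, hence of a standard map, which up to an isometry is of product form $(x,y,z) \mapsto (f(x),g(y),z)$ or $(x,y,z) \mapsto (g(y),f(x),z)$ with $f,g$ bilipschitz and $q = \mathrm{id}$. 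The second coordinate $z$ records the common height $|h_1| = |h_2|$, and a product map preserves (up to bounded error) the $z$-coordinate; this should force $\vp$ to send height-zero elements to height-zero elements, at least coarsely.

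First I would pin down the algebraic translation: show that $g \in \bigoplus A_i$ if and only if the image of $g$ under $p: L_n \to \Z$ is trivial, and that this is detected geometrically by the vertex of $g$ lying in the ``waist'' $l$ (the orbit of the identity under $t$) — more precisely, $g \in \bigoplus A_i$ iff the $T_1$- and $T_2$-coordinates of $g$ both have height $0$. Next I would invoke the EFW classification to write $\vp$ as coarsely a standard product map; the crucial point is that since $\vp$ is a group automorphism it fixes the identity, and composing with the ambient isometry we may assume $\vp$ fixes $(x_0,y_0)$, so the $q$-component of the normalized product map is an honest identity on the height coordinate, not just an affine map. Then for $g$ at height $0$, its image $\vp(g)$ has height coarsely $0$, i.e. within a bounded distance $D$ (independent of $g$) of the waist. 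Finally I would upgrade ``coarsely height zero'' to ``exactly in $\bigoplus A_i$'': the set $\bigoplus A_i$ is a subgroup, $\vp(\bigoplus A_i)$ is a subgroup, and a subgroup all of whose elements lie within bounded distance of the kernel of $p$ must itself be contained in that kernel — otherwise it contains an element mapping to a nonzero integer $k$ under $p$, whose powers march off to height $\pm\infty$, contradicting the bound.

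The main obstacle I anticipate is the passage from the quasi-isometry statement to a clean algebraic containment — in particular justifying that the bounded error in the EFW theorem can be absorbed. The honest version of the argument is that $p \circ \vp$ and $\pm p$ (as maps $L_n \to \Z$) differ by a bounded function: both are, up to bounded error, the height coordinate, and $\vp$ being a homomorphism forces $p \circ \vp$ to be a homomorphism $L_n \to \Z$ that is within bounded distance of $\pm p$, hence literally equal to $\pm p$ or to $0$ on the abelianization. Since $\bigoplus A_i = \ker p$ is exactly where $p$ vanishes, and $p \circ \vp = \pm p$ means $\ker(p\circ\vp) = \ker p$, we get $\vp(\ker p) \subseteq \ker p$, which is the claim; one must also rule out $p \circ \vp \equiv 0$, which is immediate since $\vp$ is surjective and $p$ is not zero. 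An alternative, more self-contained route avoids EFW entirely: show directly from the normal forms of \cite{CT} that the word length of $t^k g$ for $g \in \bigoplus A_i$ grows linearly in $k$ while elements of $\bigoplus A_i$ have length bounded in terms of their ``support,'' so that $\bigoplus A_i$ is precisely the set of elements $g$ for which $\{g t^{jk} : j \in \Z\}$ fails to be coarsely geodesic for any $k \neq 0$ — but this essentially re-derives the geometric fact, so I would present the EFW-based argument as the main line and remark on the algebraic shortcut.
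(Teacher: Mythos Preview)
Your approach is essentially the paper's: both invoke the Eskin--Fisher--Whyte classification to see that $\vp$, as a quasi-isometry of $DL(n,n)$, is within bounded distance of a product map and therefore coarsely preserves the height coordinate, forcing $\vp(\bigoplus A_i)$ into the height-zero locus $\ker p$. The paper executes this by tracking the coordinates of each generator $a_i=(b,y_0)$ or $(x_0,D)$ under the line-preservation property of product maps, whereas your passage from ``coarsely height zero'' to exact containment via the observation that $p\circ\vp$ is a homomorphism bounded-close to $\pm p$ (hence equal to it) is a somewhat cleaner and more explicit way to absorb the bounded error---but this is a difference in packaging rather than in substance.
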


\begin{proof}
Let $a = t^{-1}(ta) = (b,y_0)$.  Since $\vp(Id) = \vp(x_0,y_0) =
(x_0,y_0)$, we have one of two situations:  a quasi-isometry of
$DL(n,n)$ which interchanges the tree factors will take any line in
$T_1$ through $x_0$ to within a uniformly bounded distance of a line
in $T_2$ passing through $y_0$, and a quasi-isometry which does not
interchange the tree factors will coarsely preserve the set of lines
in $T_1$ through $x_0$.

Respectively, we have that $\vp(a)=\vp(b,y_0) = (x_0,C)$ or
$\vp(b,y_0) = (c,y_0)$.  In either case it is clear that $\vp(a)$
must lie at height zero in $DL(n,n)$, that is,  $\vp(a) \in
\bigoplus A_i$. If $a_i$ is the generator of the $i$-th copy of $A$
in $\bigoplus A_i$, then $a_i = t^ia_0t^{-i}$, and it is easy to see
that $a_i$ either has coordinates $(x_0,D)$ or $(d,x_0)$ in
$DL(n,n)$.  Thus the same argument shows that $\vp(a_i) \in
\bigoplus A_i$ and the lemma follows.
\end{proof}

Lemma \ref{lemma:characteristic} guarantees the following
commutative diagram:

\begin{equation}\label{short-exact}
\begin{CD}
    1   @>>> \bigoplus A_i    @>>>  L_n @>>>      \Z @>>> 1 \\
    @.  @V{\ph'}VV      @V{\ph}VV   @V{\overline \ph}VV @.\\
    1   @>>> \bigoplus A_i    @>>>  L_n @>>>      \Z @>>> 1
 \end{CD}
\end{equation}
In particular, if $\Op(t) = t$, it follows immediately from Lemma
\ref{lemma:reid} that $R(\vp) = \infty$.  In all that follows, we
assume that $\Op(t) = -t$, which necessarily means that $\vp$
interchanges the tree factors in $DL(n,n)$. Since the fixed point
set of $\Op$ is trivial, if we can show that $R(\vp') = \infty$ it
will follow from Lemma \ref{lemma:reid} that $R(\vp) = \infty$ as
well.

When $n$ is prime, we are able to determine the image of the
generator $a_i$ of the $i$-th copy of $A_i$ in $\bigoplus A_i$ under
any automorphism $\vp \in Aut(L_n)$ using only the geometry of the
group. The simplest proof of this lemma uses the ``lamplighter"
picture describing group elements.  The lemma is proven for all $n$
in \cite{GW1}.

\begin{lemma}\label{lemma:ai}
Let $G = \Z_p \wr \Z = \left(\bigoplus \Z_p \right) \rtimes \Z$
where $p$ is prime, and let $\vp$ be any automorphism of $G$. Denote
by $a_i$ the generator of the $i$-th copy of $\Z_p$ in $\bigoplus
\left(\Z_p\right)_i$.  For each $j \in \Z, \ \exists i \in \Z$ and
$k \in \{1,2, \cdots ,n-1\}$ so that $\vp'(a_i^k) = a_j$.
\end{lemma}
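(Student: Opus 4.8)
The plan is to deduce the statement from Lemma~\ref{lemma:characteristic} together with one elementary fact about the group ring $\Z_p[\Z]$; the geometry enters only through Lemma~\ref{lemma:characteristic}, and everything else is bookkeeping. The key point will be that, because $p$ is prime, $\Z_p$ is a field, so the ``lamplighter picture'' of $\bigoplus A_i$ is the Laurent polynomial ring $\Z_p[t,t^{-1}]$, whose only units are monomials.

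\textbf{Setting up.} First I would use the commutative diagram provided by Lemma~\ref{lemma:characteristic}. Since $\Z$ is Hopfian, $\Op\in\mathrm{Aut}(\Z)$ and $\Op(t)=t^{\varepsilon}$ for some $\varepsilon\in\{1,-1\}$ (cf.\ Lemma~\ref{lemma:reid}; in the case of interest $\varepsilon=-1$). Choose $b_0\in\bigoplus A_i$ with $\vp(t)=b_0 t^{\varepsilon}$. For any $g\in\bigoplus A_i$, using that $\bigoplus A_i$ is abelian and normal,
\[
\vp(tgt^{-1})=\vp(t)\,\vp'(g)\,\vp(t)^{-1}=b_0\,(t^{\varepsilon}\vp'(g)t^{-\varepsilon})\,b_0^{-1}=t^{\varepsilon}\vp'(g)t^{-\varepsilon}.
\]
Writing $\sigma$ for the index shift $a_i\mapsto a_{i+1}$ (conjugation by $t$) and $w:=\vp'(a_0)$, this gives $\vp'(a_{j+1})=\sigma^{\varepsilon}(\vp'(a_j))$, hence $\vp'(a_j)=\sigma^{\varepsilon j}(w)$ for all $j\in\Z$. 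Thus $\vp'$ is determined by the single element $w$, and the whole problem reduces to understanding $w$.

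\textbf{Main step.} The crux is to show $w$ is a single-lamp element, i.e.\ $w=a_{i_0}^{c}$ for some $i_0\in\Z$ and $c\in\{1,\dots,p-1\}$. Since $\vp'$ is onto and the $a_j$ generate $\bigoplus A_i$, the shifts $\{\sigma^{j}(w):j\in\Z\}$ generate $\bigoplus A_i$; equivalently, every lamp configuration is a $\Z_p$-linear combination of shifts of $w$. Identifying $\bigoplus A_i$ with $\Z_p[t,t^{-1}]$ via $a_j\leftrightarrow t^j$, the subgroup generated by the shifts of $w$ is the ideal $(w)$, so $(w)=\Z_p[t,t^{-1}]$ and $w$ is a unit; as $p$ is prime, the units of this ring are exactly the monomials $c\,t^{i_0}$, $c\in\Z_p^{\times}$, which reads back as $w=a_{i_0}^{c}$. (In lamplighter terms: if $w$ illuminated two distinct positions $i_{\min}<i_{\max}$, then in any $\Z_p$-combination of shifts of $w$ the leftmost and rightmost surviving shifts contribute lamps that cannot cancel --- this is exactly where $\Z_p$ having no zero divisors is used --- at two distinct positions, so no single-lamp element is such a combination, a contradiction.) I expect this to be the only real obstacle.

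\textbf{Finishing.} Given the main step, $\vp'(a_j)=\sigma^{\varepsilon j}(a_{i_0}^{c})=a_{i_0+\varepsilon j}^{c}$, so $\vp'(a_j^{k})=a_{i_0+\varepsilon j}^{ck}$ for every $k$. For the given $j$, take $i:=\varepsilon(j-i_0)$ and let $k\in\{1,\dots,p-1\}$ be the inverse of $c$ modulo $p$; then $\vp'(a_i^{k})=a_{j}^{ck}=a_j$, as required. I would close by noting that primality is genuinely needed: over $\Z_n$ with $n$ composite the ring $\Z_n[t,t^{-1}]$ has non-monomial units (e.g.\ $1+2t$ in $\Z_4[t,t^{-1}]$), the statement of the lemma fails, and this is precisely the phenomenon behind the dichotomy in Theorem~\ref{thm:Ln}.
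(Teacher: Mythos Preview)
Your proof is correct. The underlying idea is the same as the paper's --- both exploit that, since $p$ is prime, the extremal lamps in shifts of $\vp'(a_0)$ cannot cancel --- but the packaging differs. The paper takes a hypothetical preimage $x$ of $a_j$, writes $\vp'(x)$ as a product of shifted copies of $\vp'(a_0)$, and argues directly in the lamplighter picture that the leftmost and rightmost bulbs survive (primality ensures no exponent drops to $0$), forcing $\vp'(a_0)$ to be a single bulb and $x=a_i^k$. You instead observe that surjectivity of $\vp'$ forces $w=\vp'(a_0)$ to generate $\Z_p[t,t^{-1}]$ as an ideal, hence to be a unit, hence a monomial since $\Z_p$ is a field; you then solve explicitly for $i$ and $k$. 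Your ring-theoretic framing is cleaner and makes the role of primality completely transparent, while the paper's argument stays with the combinatorics --- indeed your parenthetical remark is exactly the paper's proof. One small caution on your closing comment: the paper asserts that the lemma ``is proven for all $n$'' in \cite{GW1}, so while your example $1+2t\in\Z_4[t,t^{-1}]$ correctly shows the statement \emph{as formulated here} fails for composite $n$, what \cite{GW1} establishes is presumably a modified version rather than this exact claim.
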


\begin{proof}
Let $\vp'(a_0) = a_{i_1}^{c_1}a_{i_2}^{c_2} \cdots a_{i_k}^{c_k}
a_{-j_1}^{d_1} a_{-j_2}^{d_2} \cdots a_{-j_l}^{d_l}$.  This
expression yields a particular collection of bulbs that are
illuminated in the lamplighter picture corresponding to this
element, with $a_{min}$ and $a_{max}$, respectively, the left- and
right-most of these illuminated bulbs. Then $\vp'(a_i) = \vp'(t^i
\cdot a_0) = \overline{\vp}(t^i) \cdot \vp'(a_0)$.  Since
$\overline{\vp}(t^i)$ subtracts $i$ from the indices of all terms in
$\vp'(a_0)$, we see that $\vp'(a_i)$ is just the same series of
illuminated bulbs as in $\vp'(a_0)$ shifted $i$ units to the left.

Suppose that $\vp'(x) = a_j$, where $x=a_{n_1}^{e_1}a_{n_2}^{e_2}
\cdots a_{n_k}^{e_k} a_{-m_1}^{f_1} a_{-m_2}^{f_2} \cdots
a_{-m_l}^{f_l}$.  Since $p$ is prime, no exponents in
$\vp'(a_{n_i}^{e_i})$ or $\vp'(a_{-m_i}^{f_i})$ become congruent to
$0$ mod $p$, and thus each element of the form $\vp'(a_{n_i}^{e_i})$
or $\vp'(a_{-m_i}^{f_i})$ has the same number of illuminated bulbs
in its lamplighter picture as $\vp(a_0)$.

If we assume that $a_{min} \neq a_{max}$, it is clear that $\vp'(x)$
has illuminated bulbs in positions $min -m_l$ and $max + n_k$ and
thus cannot be equal to $a_j$. We conclude that both $a_{min} =
a_{max}$ and $x = a_i^k$ for some $i \in \Z$.
\end{proof}

Since $p$ is prime, by taking powers of $a_i^k$ it follows from
Lemma \ref{lemma:ai} that $\vp'(a_i) = a_j^r$. Thus, if $\vp'(a_0) =
a_j^k$, then $\vp'(a_i) = a^k_{j-i}$ and $\vp'(a_{j-i}) = a_i^k$ for
the same value of $k$, and we obtain by restriction a map $\vp':A_i
\bigoplus A_{j-i} \rightarrow A_i \bigoplus A_{j-i}$.

We now recall the definition of $\vp'$-twisted conjugacy classes in
$\bigoplus A_i$. If $g_1, \ g_2 \in \bigoplus A_i$ are
$\vp'$-twisted conjugate, then there exists $h \in \bigoplus A_i$ so
that $h g_1 \vp'(h)^{-1} = g_2$.  Since $\bigoplus A_i$ is abelian,
we rewrite this equation as $h \vp'(h)^{-1} = g_1^{-1} g_2$.
Switching to additive notation, we see that if $Id-\vp':A_i
\bigoplus A_{j-i} \rightarrow A_i \bigoplus A_{j-i}$ is invertible,
then $R(\vp') < \infty$.  In particular, if we can find a fixed
point of $\vp'$, we guarantee that $Id-\vp'$ is not surjective and
hence not invertible. In the proof of Theorem \ref{thm:Ln}, we prove
that $\vp'$ must have a fixed point if $n$ is $2$ or $3$, and if
$(n,2) = (n,3) = 1$ then it is possible to construct automorphisms
$\vp'$ so that $Id-\vp'$ is surjective.  This geometric method will
allow us to prove that $L_2$ and $L_3$ have property $\Rinf$, as
well as prove that when $(n,6) = 1$, $L_n$ does not have this
property. When $(n,6) \neq 1$, it will follow from the following
lemma that $L_n$ does have property $\Rinf$.

\begin{lemma}
\label{lemma:characteristic2} If $(n,6)\ne 1$ then there is a
characteristic subgroup $H_i \triangleleft L_n$ with quotient
$\mathbb Z_{\frac{n}{i}}$ where $i|n$, for $i=2$ or $3$, and $H_i
\cong L_i$.
\end{lemma}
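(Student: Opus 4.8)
The plan is to take $H_i$ to be the characteristic subgroup of $L_n$ obtained by ``scaling the base group by $i$''. Since $(n,6)\neq 1$, fix $i\in\{2,3\}$ with $i\mid n$, and write $L_n=N\rtimes\Z$ as in \eqref{eqn:Ln}, where $N=\bigoplus_{j\in\Z}A_j$ with each $A_j\cong\Z_n$ and the $\Z$-factor, generated by $t$, acts on $N$ by translation of the index. I would set
\[
H_i\;=\;iN\;=\;\bigoplus_{j\in\Z}iA_j\;\subseteq\;N\;\subseteq\;L_n .
\]
Because $i\mid n$, each $iA_j$ is the cyclic subgroup of $A_j\cong\Z_n$ of order $n/i$, so $H_i\cong\bigoplus_{j\in\Z}\Z_{n/i}$; moreover $N/H_i=\bigoplus_j A_j/iA_j\cong\bigoplus_j\Z_i$, on which $t$ acts by the same translation, so $L_n/H_i\cong\bigl(\bigoplus_{j\in\Z}\Z_i\bigr)\rtimes\Z=\Z_i\wr\Z=L_i$.

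The only point that needs an argument is that $H_i$ is characteristic in $L_n$. Let $\vp\in Aut(L_n)$. By Lemma \ref{lemma:characteristic} we have $\vp(N)\subseteq N$, hence $\vp(N)=N$. Writing $N$ additively, every element of $H_i=iN$ has the form $ix$ with $x\in N$, and since $\vp$ is a homomorphism $\vp(ix)=i\,\vp(x)$; as $\vp(x)$ ranges over $N$ when $x$ does, this gives $\vp(H_i)=iN=H_i$. Thus $H_i$ is characteristic, in particular normal, in $L_n$, and $\vp$ induces automorphisms $\vp'$ of $H_i\cong\bigoplus_\Z\Z_{n/i}$ and $\Op$ of $L_n/H_i\cong L_i$, putting us in the setting of the commutative diagram \eqref{general-short-exact} with $A=H_i$. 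I do not expect a serious obstacle here: once Lemma \ref{lemma:characteristic} is available the construction is essentially formal, and the only thing needing care is correctly reading off the quotient as the lamplighter group $L_i$. (Scaling by $n/i$ instead of $i$ produces the characteristic subgroup $(n/i)N\cong\bigoplus_\Z\Z_i$, whose quotient is $L_{n/i}$; it is the version above, with the divisor $i\in\{2,3\}$ sitting in the quotient, that is useful.)

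Finally, the lemma together with Lemma \ref{lemma:reid}(1) handles the case $(n,6)\neq 1$: for any $\vp\in Aut(L_n)$ the induced automorphism $\Op$ of $L_n/H_i\cong L_i$ satisfies $R(\Op)=\infty$ because $L_2$ and $L_3$ have property $\Rinf$, so $R(\vp)=\infty$ and hence $L_n$ has property $\Rinf$.
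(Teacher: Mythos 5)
Your computation is correct, but it establishes a different statement from the one in the lemma. The lemma asks for a characteristic subgroup $H_i$ with $H_i\cong L_i$ and $L_n/H_i\cong \Z_{n/i}$; your subgroup $iN=\bigoplus_j iA_j\cong\bigoplus_{\Z}\Z_{n/i}$ is not isomorphic to $L_i$ (it is not even finitely generated, and it contains no power of $t$), and its quotient is $L_i$ rather than $\Z_{n/i}$. What you have proved is precisely the remark the authors state without proof immediately \emph{after} the lemma: there is a characteristic subgroup $K_i$ with $L_n/K_i\cong L_i$. The paper's own $H_i$ is the sub-wreath-product $\langle\tau^{n/i}\rangle\wr\Z$, generated by the unique order-$i$ subgroup of each $\Z_n$-factor together with $t$; that subgroup is isomorphic to $L_i$, and since killing $t$ identifies all the $a_j$, the quotient is the cyclic group $\Z_{n/i}$. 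So, as a proof of the statement as written, your argument has a genuine gap: you never exhibit a copy of $L_i$ sitting inside $L_n$ as the kernel.

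That said, your version is the more robust one, for two reasons. First, your characteristicness argument ($\vp(ix)=i\vp(x)$ on the characteristic subgroup $N$) is airtight, whereas the paper's argument for its $H_i$ rests on the claim that every automorphism of $L_n$ preserves the subgroup generated by $(1,t)$. That claim fails: the assignment $a\mapsto a$, $t\mapsto at$ respects the defining relations and is surjective, hence is an automorphism since $L_n$ is Hopfian, and for $n=4$ it sends $t$ to $at\notin\langle\tau^{2}\rangle\wr\Z$, so the paper's $H_2$ is not characteristic in $L_4$. Second, your version suffices for the intended application: the short exact sequence $1\to\bigoplus_{\Z}\Z_{n/i}\to L_n\to L_i\to 1$, together with part (1) of Lemma \ref{lemma:reid} and the Hopficity of $L_i$ (so that $\Op\in Aut(L_i)$), yields $R(\vp)=\infty$ for every $\vp$ once $L_2$ and $L_3$ are known to have property $\Rinf$, exactly as in your closing paragraph; the paper instead uses its kernel $\cong L_i$ with finite quotient $\Z_{n/i}$ and part (2) of the same lemma. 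You should therefore either restate the lemma in the form you actually prove, or, if you want the statement as written, construct the copy of $L_i$ inside $L_n$ explicitly and treat its characteristicness as the delicate point rather than a formality.
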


\begin{proof}
Suppose that $2|n$.  Let $i=2$, and if $\tau$ is the generator for
$\mathbb Z_n$ then the element $\tau^{\frac{n}{2}}$ is the unique
element of order $2$ in $\mathbb Z_n$. Consider the subgroup
$H_2=\langle \tau^{\frac{n}{2}} \rangle \wr \mathbb Z$ of $L_n$,
where $\Z= \langle t \rangle$. It is clear that $H_2$ is isomorphic
to $L_2$ and that $H_2$ contains $\bigoplus_j (\langle
\tau^{\frac{n}{2}} \rangle)_j$, the subgroup of $L_n$ consisting of
elements of order $2$. It is easy to see that every automorphism
$\vp \in Aut(L_n)$ sends $\bigoplus_j (\langle \tau^{\frac{n}{2}}
\rangle)_j$ to itself, and the subgroup generated by $(1,t)$ to
itself, where $1 \in \bigoplus_j (\langle \tau^{\frac{n}{2}}
\rangle)_j$ and $t \in \Z$. Since $H_2$ is generated by
$(\tau^{\frac{n}{2}},1)$ and $(1,t)$, it follows that $H_2$ is
characteristic in $L_n$. Now $L_n/H_2 \cong \mathbb
Z_{\frac{n}{2}}$. 
The case when $3|n$ and we let $i=3$ is similar.
\end{proof}

One can also show that if $(n,6)\ne 1$ then there is a
characteristic subgroup $K_i$ of $L_n$ such that $L_n/K_i \cong L_i$
for $i=2$ or $3$.

\noindent {\it Proof of Theorem \ref{thm:Ln}.} Consider $\vp \in
Aut(L_n)$, and suppose that $\vp'(a_0) = a_j^k$. Since $\vp$ is an
automorphism, it is also a quasi-isometry.  By \cite{EFW} it is a
uniformly bounded distance from a product map. Let $K$ and $C$ be
the quasi-isometry constants of this product map. We note that since
we are assuming that $\Op(t) = -t$, this product map interchanges
the two tree factors, that is, it maps lines in the first tree
factor in $DL(n,n)$ to within a uniformly bounded distance $D$ of
lines in the second tree factor, where the constant $D$ depends on
$K$ and $C$.

We know that $\vp': \bigoplus A_i \rightarrow \bigoplus A_i$, and
our assumption that $\vp'(a_0) = a_j^k$ yields the restriction
$\vp': A_i \bigoplus A_{j-i} \rightarrow A_i \bigoplus A_{j-i}$. We
call this subset of $\bigoplus A_i$ a {\em block}. We now explain
the effect on these blocks of the geometric fact that $\vp$ is a
uniformly bounded distance from product map. Recall that the value
of $j$ is determined by the image of $a_0$, that is, $\vp'(a_0) =
a_j^k$.  We always consider $i>j$, so that one factor in the block
has a positive index, and the other a negative index. This ensures
that coordinates can be introduced on the $n^2$ points in this block
so that if $a_i^{m_1} = (x_0,B)$ and $a_{j-i}^{m_2} = (c,y_0)$, then
$a_i^{m_1}a_{j-i}^{m_2}$ will have coordinates $(c,B)$.

Since $\vp$ is within a uniformly bounded distance $D$ of a product
map, we additionally choose $i>KD$, where $K$ is the maximum of the
bilipshitz constants of the maps on the tree factors. (Note that
this lower bound is perhaps much larger than required.) This ensures
that when we consider the coordinates in $T_2$ of the points $a_i^k$
and in $T_1$ of the points $a_{j-i}^k$ for $0 \leq k \leq n-1$,
these coordinates are at least distance $2KD$ apart, using the
standard metric on the tree which assigns each edge length one. This
is easy to see by writing $a_i^k = t^{(i-1)} ta^k t^{-i}$; when
$i<0$, the second coordinates of these points, as $k$ varies, is
always $y_0$. The first coordinates, pairwise, are distance $2i>2KD$
apart in $T_1$. More importantly, if $l$ is any line in $T_2$
through the second coordinate of $a_i^k$ on which the height
function is strictly decreasing,  this product map takes $l$ to
within $D$ of a line in $T_1$; the $D$-neighborhood of this line, by
construction, can contain a unique first coordinate of one of the
points in $A_{j-i}$.

Thus the fact that lines in $T_1$ are mapped to within a uniformly
bounded distance of lines in $T_2$ tells us that there is a
surjective function $f_1$ from the set of $T_1$ coordinates of the
points in $A_{j-i}$ to the set of $T_2$ coordinates of the points in
$A_{i}$. In addition, there is a surjective function $f_2$ from the
set of $T_2$ coordinates of the points in $A_{i}$ to the set of
$T_1$ coordinates of the points in $A_{j-i}$.  Moreover, these maps
combine in the following way, as follows from the definition of a
product map given in \cite{EFW}.  If $(x,Y) \in \Ai$, then
$\vp'(x,Y) = (f_2(Y),f_1(x))$.

We begin with the case  $n=2$.  Consider the restriction $\vp':A_i
\bigoplus A_{j-i} \rightarrow A_i \bigoplus A_{j-i}$, where $j$ is
defined by the image of $a_0$ under $\vp'$. Then the set $A_i
\bigoplus A_{j-i}$ consists of four points. Assume that $i >j$ and
that $i$ was chosen to be sufficiently large, as described above.
The first choice ensures that the index of one factor is negative,
and the other is positive; the second choice guarantees that the
points within the block $A_i \bigoplus A_{j-i}$ have coordinates in
the appropriate trees which are at least $D$ units apart from each
other.   We describe the points in $\Ai$ using both the lamplighter
picture of $L_2$ as well as giving coordinates in the graph
$DL(2,2)$, as follows:
\begin{enumerate}
\item the identity  $(x_0,y_0)$
\item a single bulb illuminated in position $i$, where we
introduce the coordinates $a_i = (x_0,B)$
\item a single bulb illuminated in position  $j-i$, where we
introduce the coordinates $a_{j-i} = (c,y_0)$
\item bulbs illuminated in positions $i$ and $j-i$, which
necessarily has coordinates $(c,B)$.
\end{enumerate}
Note that it is exactly the fact that we ensured that $i$ and $j-i$
have opposite signs which determines the coordinates of $a_ia_{j-i}$
to be $(c,B)$.

We use the geometric result of \cite{EFW} that $\vp$ is a uniformly
bounded distance from a product map, which is bilipshitz on each
tree and then interchanges the tree factors, to show that $(c,B)$ is
a fixed point for $\vp'$.  Let $f_1$ and $f_2$ be the functions on
the coordinates of the points of $\Ai$ defined above.  We must have
$f_1(x_0) = y_0$ and $f_2(y_0) = x_0$ since the identity $(x_0,y_0)$
is preserved under any automorphism. Lemma \ref{lemma:ai} implies
that $f_1(B) = c$ and $f_2(c) = B$. This forces $\vp'(c,B) = (c,B)$;
the existence of a fixed point for $\vp'$ says that $Id-\vp'|_{A_i
\bigoplus A_{j-i}}$ is not invertible when $n=2$.  Thus within each
block of the form $\Ai$, for $i>j$, there are at least two distinct
$\vp'$-twisted conjugacy classes.  By choosing group elements in
$\bigoplus A_i$ with entries in the different $\vp'$-twisted
conjugacy classes within each block, we can create infinitely many
different $\vp'$-twisted conjugacy classes in $\bigoplus A_i$. We
conclude that $R(\vp')$ and hence $R(\vp)$ are infinite.

In the case $n=3$, the blocks $\Ai$ have nine elements, and we have
the additional algebraic fact that $a_i^2 = a_i^{-1}$.  We have two
choices for the maps $f_1$ and $f_2$, as follows.
\begin{enumerate}
\item If $\vp'(a_i) = a_{j-i}$ and $\vp'(a_{j-i}) = a_{i}$ then it
is clear that $f_1(c_1) = B_1$ and $f_2(B_1) = c_1$, resulting in
$(c_1,B_1)$ being fixed under $\vp'$.
\item If $\vp'(a_i) = a_{j-i}^2$ and $\vp'(a_{j-i}) = a_{i}^2$ then
we know that $f_1(c_1) = B_2$ and $f_2(B_1) = c_2$.  This forces
$f_1(c_2) = B_1$ and $f_2(B_2) = c_1$.  We see immediately that
$(c_1,B_2)$ is fixed under $\vp'$.
\end{enumerate}
In either case, the existence of a non-trivial fixed point ensures
that $L_3$ has property $\Rinf$.

When $n>3$ is odd and $(n,3) = 1$,  we use the blocks $A_{j-i}
\bigoplus A_{i}$ to construct  $\vp': A_i \bigoplus A_{j-i}
\rightarrow A_i \bigoplus A_{j-i}$ with no nontrivial fixed points,
under the assumption that $\vp'(a_0) = a_j^k$. Thus the map $Id -
\vp'$ will be surjective, hence invertible, on each block, and we
see that there must be a single $\vp'$-twisted conjugacy class in
$\bigoplus A_i$. This then implies that $L_n$ does not have property
$\Rinf$. We begin by assuming that $\vp': \bigoplus A_i \rightarrow
\bigoplus A_i$ is an automorphism with $\vp'(a_0) = a_j^k$, and that
$\overline{\vp}(t): \Z \rightarrow \Z$ is given by
$\overline{\vp}(t) = t^{-1}$. The compatibility condition given in
Equation \ref{eqn:compatibility} ensures that these automorphisms
together induce an automorphism of $L_n = \Z_n \wr \Z$.

Since $\vp'(a_0) = a_m^r$, we again obtain a map on blocks, where
the block $\Ai \bigoplus A_{j-i}$ consists of $n^2$ points; we
introduce coordinates on these points as follows.
\begin{center}
\begin{tabular}{|c|c|c|c|c|c|}
  \hline
   & $a_i^0$ & $a_i$ & $a_i^2$ & $\cdots$ & $a_i^{n-1}$ \\
  \hline
  $a_{j-i}^0$ & $(x_0,y_0)$ & $(x_0,B_1)$ & $(x_0,B_2)$ & $\cdots$ & $(x_0,B_{n-1})$ \\
  $a_{j-i}$ &  $(c_1,y_0)$ & $(c_1,B_1)$ & $(c_1,B_2)$ & $\cdots$ & $(c_1,B_{n-1})$ \\
  $a_{j-i}^2$ & $(c_2,y_0)$ & $(c_2,B_1)$ & $(c_2,B_2)$ & $\cdots$ & $(c_2,B_{n-1})$\\
  $\vdots$ & $\vdots$ & $\vdots$ & $\vdots$ & $\vdots$ &$\vdots$ \\
  $a_{j-i}^{n-1}$ & $(c_{n-1},y_0)$ & $(c_{n-1},B_1)$ & $(c_{n-1},B_2)$ & $\cdots$ & $(c_{n-1},B_{n-1})$\\
  \hline
\end{tabular}
\end{center}
These points determine $n$ points in each tree, all at height zero,
namely $\{x_0,c_1, \cdots ,c_{n-1} \}$ in $T_1$ and $\{y_0,B_1,
\cdots ,B_{n-1}\}$ in $T_2$.  Choose $i> \max \{j,KD\}$ as above.

We now show that when $(n,2) = (n,3)=1$, we can construct maps $f_1$
and $f_2$ so that the induced automorphism $\vp': \Ai \rightarrow
\Ai$ has no non-trivial fixed points, and thus $Id-\vp':A_i
\bigoplus A_{j-i} \rightarrow A_i \bigoplus A_{j-i}$ is invertible,
which implies that $R(\vp') < \infty$. Suppose that $\vp'(a_i) =
a_{j-i}^2$ and $\vp'(a_{j-i}) = a_i^2$. To obtain a fixed point, we
must solve the equations $\vp'(a_i^k) = a_{j-i}^{2k(\text{mod} \
n)}$ and $\vp'(a_{j-i}^{2k(\text{mod} \ n)}) = a_{i}^k$ for $k$,
that is, $3k \equiv 0 (\text{mod} \ n)$ which has a solution unless
$3|n$.  Thus when $(n,3) = 1$, the map $1-\vp': A_i \rightarrow A_i$
is invertible on each block, hence invertible on $\bigoplus A_i$.

Since we are trying to show that $R(\vp) < \infty$, we refer to part
(4) of Lemma \ref{lemma:reid}.  We know that $R(\overline \ph)=\#
Coker(1-\overline \ph)=2$, so $\overline \ph$ has two twisted
conjugacy classes: $[1]$ and $[t]$. Recalling Equation
\ref{eqn:seq-of-sets}, we must count the twisted conjugacy classes
lying over the classes $[1]$ and $[t]$.  Over $[1]$ we have
$R(\vp')$ twisted conjugacy classes, and over $[t]$ we have
$R(t\cdot \ph')$. Since $Fix \overline \ph=\{1\}$, it follows that
$R(\ph)=R(\ph')+R(t\cdot \ph')$. The action of $t$ shifts the block
$A_i \bigoplus A_{j-i}$ by increasing all the indices by one, and
thus the geometry of the lamplighter picture for $L_n$ easily
implies that $R(\ph')=R(t\cdot \ph')$ and hence
$R(\ph)=2R(\ph')<\infty$.

When $2|n$, we construct the following short exact sequence of
groups:
$$1 \rightarrow \Z_2\wr \Z  \rightarrow \Z_n \wr \Z \rightarrow  \Z_{\frac{n}{2}} \rightarrow 0$$
It is proven in Lemma \ref{lemma:characteristic} that $\Z_2\wr \Z$
is characteristic in $\Z_n \wr \Z$, allowing us to obtain a
commutative diagram based on the above short exact sequence and any
$\vp \in Aut(\Z_n \wr \Z)$. We proved above that $\Z_2 \wr \Z$ has
property $\Rinf$ and it follows from Lemma \ref{lemma:reid} $(2)$
that $\Z_n \wr \Z$ has property $\Rinf$ as well. The argument is
analogous when $3|n$. \qed

When $G$ is a finite abelian group, $G \wr \Z$ has as its Cayley
graph $DL(|G|,|G|)$, with respect to the generating set
$\{t,tg_1,tg_2, \cdots ,tg_{n-1}\}$ where $\{g_i\}$ represent all
elements of $G$.  Gon\c calves and Wong \cite{GW1} prove the following
general result concerning which groups of this form have property
$\Rinf$.

\begin{theorem}[\cite{GW1}, Theorem 3.7]
\label{thm:abeliancase} Let $$G = \bigoplus_j
(\Z_{p_j^{k_j}})^{r_j}$$ be a finite abelian group, where the $p_j$
are distinct primes.  Then for all automorphisms $\vp \in Aut(G)$,
$R(\vp) = \infty$ if and only if $p_j = 2$ or $3$ for some $j$ and
$r_j = 1$.
\end{theorem}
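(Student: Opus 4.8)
The statement concerns a finite abelian group $G$, so $R(\vp)\le|G|<\infty$ for every $\vp\in Aut(G)$; in keeping with the rest of this section it should be read as the assertion that $G\wr\Z$ has property $\Rinf$ if and only if $p_j\in\{2,3\}$ and $r_j=1$ for some $j$. I would prove the two implications separately, reusing the machinery developed above for $L_n$.

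For the ``if'' direction, suppose $p:=p_1\in\{2,3\}$ and $r_1=1$, so that the Sylow $p$-subgroup of $G$ is cyclic and $G/pG\cong\Z_p$. The subgroup $K:=p\cdot(\bigoplus_i G_i)=\bigoplus_i (pG)_i$ is characteristic in the torsion subgroup $\bigoplus_i G_i$ of $G\wr\Z$ (the set of elements of finite order), hence characteristic in $G\wr\Z$, and $(G\wr\Z)/K\cong(\bigoplus_i (G/pG)_i)\rtimes\Z\cong\Z_p\wr\Z=L_p$. So every $\vp\in Aut(G\wr\Z)$ induces $\Op\in Aut(L_p)$, and since $L_2$ and $L_3$ have property $\Rinf$ by Theorem~\ref{thm:Ln}, we get $R(\Op)=\infty$; Lemma~\ref{lemma:reid}(1) then gives $R(\vp)=\infty$, so $G\wr\Z$ has property $\Rinf$.

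For the ``only if'' direction, suppose that for every $j$ either $p_j\ge 5$ or $r_j\ge 2$; I will construct a single $\vp\in Aut(G\wr\Z)$ with $R(\vp)<\infty$. Write $G=\bigoplus_j P_j$ with $P_j$ the Sylow $p_j$-subgroup, so $Aut(G)\cong\prod_j Aut(P_j)$. The key point is to find $\mu\in Aut(G)$ with $1-\mu$ and $1+\mu$ both in $Aut(G)$, and it suffices to do this in each factor $Aut(P_j)$. Since $P_j=(\Z_{p_j^{k_j}})^{r_j}$, an endomorphism of $P_j$ is an automorphism exactly when its reduction modulo $p_j$ lies in $GL(r_j,\mathbb F_{p_j})$, so I only need a matrix in $GL(r_j,\mathbb F_{p_j})$ having neither $1$ nor $-1$ as an eigenvalue, lifted arbitrarily to a matrix over $\Z_{p_j^{k_j}}$. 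When $r_j=1$ such a scalar exists precisely when $p_j\ge 5$, which is the hypothesis in that case; when $r_j\ge 2$ one can take the companion matrix of a monic degree-$r_j$ polynomial $f$ over $\mathbb F_{p_j}$ with $f(0),f(1),f(-1)$ all nonzero, and such an $f$ exists even for $p_j\in\{2,3\}$ (for instance $x^{r}+x+1$ over $\mathbb F_2$, and $x^{r}+1$ or $x^{r}-x+1$ over $\mathbb F_3$ according to the parity of $r$). Assembling the $\mu_j$ produces the desired $\mu$.

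Given such a $\mu$, set $\Op(t)=t^{-1}$ on $\Z$ and define $\vp'$ on $\bigoplus_i G_i$ by $\vp'=\mu\circ\sigma$, where $\sigma$ is the coordinate reflection $i\mapsto -i$ and $\mu$ is applied within each coordinate. Since $\sigma$ conjugates the shift $\Theta(t)$ to its inverse while $\mu$ commutes with the shift, the compatibility condition~\eqref{eqn:compatibility} holds and $\vp'$, $\Op$ assemble into an automorphism $\vp$ of $G\wr\Z$. Because $\Op(t)=t^{-1}$, Lemma~\ref{lemma:reid}(4) gives $R(\vp)=R(\vp')+R(t\cdot\vp')$ as soon as both terms are finite. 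As $\bigoplus_i G_i$ is abelian, $R(\vp')$ is the order of the cokernel of $1-\vp'$; splitting $\bigoplus_i G_i=G_0\oplus\bigoplus_{i>0}(G_i\oplus G_{-i})$, the map $1-\vp'$ restricts to $1-\mu$ on $G_0$ (an automorphism) and to $(x,y)\mapsto(x-\mu(y),\,y-\mu(x))$ on each pair block $G_i\oplus G_{-i}$, which is injective---hence surjective---because $1-\mu^2=(1-\mu)(1+\mu)$ is an automorphism; thus $1-\vp'$ is onto and $R(\vp')=1$. The same computation over the fixed-point-free involution $i\mapsto 1-i$ underlying $t\cdot\vp'$ gives $R(t\cdot\vp')=1$, so $R(\vp)=2<\infty$ and $G\wr\Z$ does not have property $\Rinf$. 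The one genuine obstacle is the previous step, producing $\mu$ with $1\pm\mu\in Aut(G)$, which is precisely where the primes $2$ and $3$ enter; the characteristic-quotient argument, the compatibility check, and the Reidemeister count are routine once $\mu$ is in hand.
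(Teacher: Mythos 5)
You should note first that the paper itself does not prove Theorem \ref{thm:abeliancase}: it is quoted from \cite{GW1} (it is the classification of the family $\mathfrak A$), so there is no internal proof to compare against; I can only assess your argument on its own terms, and it is correct. Your reinterpretation of the statement is the right one: as literally written ($\vp\in Aut(G)$, $G$ finite) it is vacuous, and the intended assertion, per the paper's definition of $\mathfrak A$, is that $G\wr\Z$ has property $\Rinf$ exactly under the stated condition. Your ``if'' direction is sound: $\bigoplus_i G_i$ is precisely the torsion set of $G\wr\Z$, hence characteristic, $K=p\cdot\bigoplus_i G_i$ is fully invariant in it, and $(G\wr\Z)/K\cong L_p$ with $p\in\{2,3\}$; the only sentence worth adding is why the induced map on $L_p$ is an automorphism (either because $\vp$ and $\vp^{-1}$ both preserve the characteristic subgroup $K$, or via Hopficity of $L_p$ as in Lemma \ref{lemma:auto}), since Lemma \ref{lemma:reid}(1) together with Theorem \ref{thm:Ln} is being applied to an automorphism. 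This parallels the paper's Lemma \ref{lemma:characteristic2} and the remark after it, which realize $L_2$ or $L_3$ as a characteristic subgroup (resp.\ quotient) of $L_n$. Your ``only if'' direction is the natural generalization of the paper's treatment of $(n,6)=1$ in the proof of Theorem \ref{thm:Ln}: there $\mu$ is multiplication by $2$ combined with the reflection forced by $\Op(t)=t^{-1}$, and finiteness rests on $1-\mu$ and $1+\mu$ (i.e.\ $-1$ and $3$) being invertible mod $n$; you replace the scalar by any $\mu\in Aut(G)$ with $1\pm\mu\in Aut(G)$, reduce to the primary components via $Aut(G)\cong\prod_j Aut(P_j)$, and use companion matrices of polynomials over $\mathbb F_{p_j}$ avoiding the roots $0,\pm1$ to handle $p_j\in\{2,3\}$ when $r_j\ge 2$ --- which is exactly where the dichotomy $r_j=1$ versus $r_j\ge 2$ enters. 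The compatibility check for \eqref{eqn:compatibility}, the block decomposition showing $1-\vp'$ and $1-t\cdot\vp'$ are bijective (the latter over the fixed-point-free pairing $i\mapsto 1-i$), and the count $R(\vp)=R(\vp')+R(t\cdot\vp')=2$ from Lemma \ref{lemma:reid}(4) are all correct. In short, you have supplied a complete proof of the quoted theorem, in the same spirit as \cite{GW1} and as the paper's own argument for $\Z_n\wr\Z$.
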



\section{Counting twisted conjugacy classes in $G \wr \Z$}

We end this paper by studying property $R_{\infty}$ for general
lamplighter groups of the form $G\wr \mathbb Z$, where $G$ is an
arbitrary finite group.  The Cayley graph of $G \wr \Z$ is again the
Diestel-Leader graph $DL(n,n)$, where $n = |G|$, with respect to the
generating set $\{tg|g \in G\}$.

We prove several results about when these general lamplighter groups
have property $\Rinf$, but do not give a complete classification of
which groups of this form have or do not have the property.  The
first two results are algebraic, using various expressions of $G$ as
a group extension, and we end with geometric results relying on
$DL(n,n)$.  We first note that as for $\Z_n \wr \Z$, we have the
following short exact sequence in which $\bigoplus G_i$ is
characteristic:
$$1 \rightarrow \bigoplus G_i \rightarrow G \wr \Z \rightarrow \Z
\rightarrow 1$$

\begin{lemma}\label{lemma:+G-characteristic}
Let $\vp \in Aut(G \wr \Z)$.  Then $\vp(\bigoplus G_i) \subset
\bigoplus G_i.$
\end{lemma}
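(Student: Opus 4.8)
The plan is to imitate the argument of Lemma~\ref{lemma:characteristic} but to locate the image of each generator of $\bigoplus G_i$ by a purely geometric (height-respecting) argument rather than by the lamplighter picture, which relied on $G$ being cyclic. First I would fix the Cayley graph $DL(n,n)$ of $G\wr\Z$, $n=|G|$, with respect to the generating set $\{tg\mid g\in G\}$, and record that the identity is the vertex $(x_0,y_0)$ at height $0$ in both trees. The subgroup $\bigoplus G_i$ is exactly the kernel of the map $G\wr\Z\to\Z$ given by the exponent sum of the $t$-part of a word, and as in the $\Z_n\wr\Z$ case one checks that changing to the generating set $\{tg\}$ does not change this characterization: a word in $\{(tg)^{\pm1}\}$ has $t$-exponent sum $0$ iff the corresponding vertex lies at height $0$ in both trees. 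So it suffices to show that $\vp$ carries height-$0$ vertices in the subgroup $\bigoplus G_i$ to height-$0$ vertices.

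Next I would identify the vertices of $\bigoplus G_i$ geometrically. Writing the generator $g_i=t^i g t^{-i}$ of the $i$-th copy $G_i$, the same computation as in Lemma~\ref{lemma:characteristic} shows that each element of $G_i$ with $i\le 0$ has $T_2$-coordinate equal to $y_0$ (it lies on a line through $x_0$ in $T_1$), and each element of $G_i$ with $i>0$ has $T_1$-coordinate $x_0$. In particular every generator $g_i$ lies on a line through $x_0$ in one of the two trees, passing through the identity vertex. Now I invoke the Eskin--Fisher--Whyte structure theorem: $\vp$, being an automorphism and hence a quasi-isometry of $DL(n,n)$ fixing $(x_0,y_0)$, is within bounded distance of a standard product map, which either preserves or interchanges the two tree factors and in either case sends a line in $T_1$ through $x_0$ to within bounded distance of a line through $y_0$ in $T_1$ or $T_2$ (and symmetrically for lines in $T_2$). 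Consequently $\vp(g_i)$ lies within bounded distance of such a line, which forces $\vp(g_i)$ to have $T_2$-coordinate $y_0$ or $T_1$-coordinate $x_0$; in particular $\vp(g_i)$ sits at height $0$ in both trees, i.e.\ $\vp(g_i)\in\bigoplus G_i$. Since the $g_i$, together with $t$, generate $G\wr\Z$, and the $g_i$ alone generate $\bigoplus G_i$, this gives $\vp(\bigoplus G_i)\subseteq\bigoplus G_i$.

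The main obstacle is making the ``line through $x_0$'' reduction airtight when $G$ is nonabelian. In the abelian case a single generator $g_i$ determines the full copy $G_i$ by taking powers, but here $G_i\cong G$ need not be cyclic, so I would argue instead that \emph{every} element of $G_i$, not merely a generator, lies on a line through the identity vertex in one fixed tree (the $T_1$-line through $x_0$ when $i\le0$, the $T_2$-line through $y_0$ when $i>0$), which follows directly from the normal-form/height computation $g=t^{\mp i}wt^{\pm i}$ already used for $\Z_n\wr\Z$ and does not use commutativity of $G$. Then the product-map argument applies verbatim to each such element. One should also note, as a harmless remark, that we do not yet claim $\vp$ restricts to an automorphism of $\bigoplus G_i$ — only an injective endomorphism — but since $\Z$ is Hopfian, Lemma~\ref{lemma:auto} upgrades this to an automorphism once the containment is established, exactly as in the lamplighter case.
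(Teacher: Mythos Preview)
Your proposal is correct and follows essentially the same approach as the paper's proof: both place coordinates on $DL(n,n)$, observe that the elements of each $G_i$ share a tree-coordinate with the identity, invoke the Eskin--Fisher--Whyte product-map structure to conclude that $\vp$ carries such elements to height-zero vertices, and hence land in $\bigoplus G_i$. Your added care in noting that every element of $G_i$ (not just a single generator) has the required coordinate form---so that cyclicity of $G$ is irrelevant---is a useful clarification, but the method is the same.
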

\begin{proof}
Let $|G|=n$ and put coordinates on $DL(n,n)$ as we did in the case
of the lamplighter group $L_n = \Z_n \wr \Z$: denote the identity by
$(x_0,y_0)$ and the generators of $G_0$, the copy of $G$ in
$\bigoplus G$ indexed by $0$, by $(x_0,B_i)$.

Let $\vp \in Aut(G \wr \Z)$.  The argument given in Lemma
\ref{lemma:ai} quoting the result on \cite{EFW} again applies, and
we conclude that $\vp(x_0,B_i) = (c_i,y_0)$, for some coordinates
$c_i$. Regardless of the actual vertices of the trees represented by
the $c_i$, we conclude that $\vp(x_0,B_i)$ lies at height zero in
$DL(n,n)$, that is, $\vp(x_0,B_i) \in \bigoplus G_i$. Since
$\vp(G_0) \subset \bigoplus G_i$ and the action of $t$ is to
translate between the factors of $G$ in the sum $\bigoplus G_i$, we
see that $\bigoplus G_i$ is characteristic in $G \wr \Z$.
\end{proof}

Applying Lemma \ref{lemma:+G-characteristic}, we see that the
following diagram is commutative, for $\phi \in Aut(G \wr \Z)$.
\begin{equation}\label{general-type}
\begin{CD}
    1   @>>> \bigoplus G_i    @>>>  G\wr \Z @>>>       \Z @>>> 1 \\
    @.  @V{\ph'}VV      @V{\ph}VV   @V{\overline \ph}VV @.\\
    1   @>>> \bigoplus G_i    @>>>  G\wr \Z @>>>       \Z @>>> 1
 \end{CD}
\end{equation}

Let $G = \{Id,g_1,g_2, \cdots g_n\}$.  To denote the elements of the
$i$-th copy of $G$ in $\bigoplus G_i$, we use the notation $g_{i,j}$
where $i$ is the index of $G_i$ in $\bigoplus G_i$ and $j \in \{1,2,
\cdots ,n\}$.  The symbol $Id_i$ will denote the identity in $G_i$.

If $G$ itself can be written as a group extension $1 \rightarrow A
\rightarrow  G \rightarrow  C \rightarrow  1$ then one always
obtains the following short exact sequences, although in both cases
the kernel is not necessarily a characteristic subgroup of $G \wr
\Z$:

\begin{equation}\label{type1}
1 \rightarrow \bigoplus A_i \rightarrow  G \wr \Z \rightarrow  C \wr
\Z \rightarrow 1 \hspace{.5in} \text{and} \hspace{.5in} 1
\rightarrow A \wr \Z \rightarrow G \wr \Z \rightarrow  C \rightarrow
1
\end{equation}

Given an arbitrary finite group $G$, we have two natural short exact
sequences:
$$1 \rightarrow [G,G] \rightarrow  G  \rightarrow  G^{Ab} \rightarrow 1 \hspace{.5in} \text{and} \hspace{.5in} 1
\rightarrow Z(G) \rightarrow G  \rightarrow  G/Z(G) \rightarrow 1$$
where $[G,G]$ and $Z(G)$ denote the commutator subgroup and the
center of $G$, respectively.  Both of these subgroups are
characteristic in $G$.  To obtain the related commutative diagrams,
we must first prove the following lemma.

\begin{lemma}\label{lemma:commutator-center-char}
The subgroups $\bigoplus ([G,G])_i$ and $Z(G) \wr \Z$ of $G \wr \Z$
are both characteristic.
\end{lemma}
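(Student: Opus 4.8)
The plan is to handle the two characteristic subgroups separately, in each case reducing the claim to the characteristic-in-$G\wr\Z$ property of $\bigoplus G_i$ already established in Lemma~\ref{lemma:+G-characteristic}, together with the intrinsic fact that $[G,G]$ and $Z(G)$ are characteristic in $G$.

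First I would treat $\bigoplus([G,G])_i$. By Lemma~\ref{lemma:+G-characteristic}, any $\vp\in Aut(G\wr\Z)$ restricts to an automorphism $\vp'$ of $\bigoplus G_i$. The subgroup $\bigoplus([G,G])_i$ is precisely the commutator subgroup $[\bigoplus G_i,\bigoplus G_i]$, since the factors $G_i$ commute with one another in the direct sum and so commutators are computed factorwise; hence it is characteristic in $\bigoplus G_i$ and therefore invariant under $\vp'$. This shows $\vp(\bigoplus([G,G])_i)\subseteq\bigoplus([G,G])_i$, as desired. (Alternatively one can observe that $\bigoplus([G,G])_i$ is the intersection of $\bigoplus G_i$ — itself characteristic — with $[G\wr\Z,G\wr\Z]$, which is automatically characteristic.)

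For $Z(G)\wr\Z$, I would first identify this subgroup intrinsically inside $G\wr\Z$. One natural description: $\bigoplus Z(G)_i = Z(G)\wr\Z \cap \bigoplus G_i$ is exactly the centralizer of $\bigoplus G_i$ within $\bigoplus G_i$, equivalently the center $Z(\bigoplus G_i)$, so it is characteristic in $\bigoplus G_i$ and hence $\vp'$-invariant by Lemma~\ref{lemma:+G-characteristic}. It then remains to see that the extra $\Z$ factor is carried along: since $\overline\vp(t)=\pm t$ (the only automorphisms of the quotient $\Z$), a preimage of $t$ under $p$ has the form $w t^{\pm1}$ with $w\in\bigoplus G_i$, and one can adjust by an element of $\bigoplus Z(G)_i$ to conclude $\vp$ sends the subgroup generated by $(\mathrm{Id},t)$ and $\bigoplus Z(G)_i$ — which is $Z(G)\wr\Z$ — into itself. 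I expect this bookkeeping step, controlling the $t$-coordinate of $\vp$ and checking that conjugation by the image of $t$ still preserves $\bigoplus Z(G)_i$ (which it does, as $\bigoplus Z(G)_i$ is $t$-invariant by its factorwise description), to be the only place requiring care; it parallels the argument in Lemma~\ref{lemma:characteristic2}. The main obstacle is thus not deep — it is purely the combinatorial verification that the semidirect-product structure interacts correctly with $\vp'$ — but it must be written so that the conclusion $\vp(Z(G)\wr\Z)\subseteq Z(G)\wr\Z$ is genuinely established and not merely asserted for the $\bigoplus$ part.
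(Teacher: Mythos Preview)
Your treatment of $\bigoplus([G,G])_i$ matches the paper exactly: both identify this subgroup with the commutator $[\bigoplus G_i,\bigoplus G_i]$ (the paper writes out the coordinate-wise computation showing every component of $[a,b]$ is either trivial or a commutator in $G$; you state the same fact in one line), hence it is characteristic in $\bigoplus G_i$, and $\vp$-invariance then follows from Lemma~\ref{lemma:+G-characteristic}.

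For $Z(G)\wr\Z$, your identification $\bigoplus Z(G)_i=Z(\bigoplus G_i)$ again agrees with the paper. The divergence is in how the $\Z$-factor is handled, and the step you flag as ``the only place requiring care'' is a genuine gap that your sketch does not close. You write $\vp(t)=wt^{\pm1}$ with $w\in\bigoplus G_i$ and propose to ``adjust by an element of $\bigoplus Z(G)_i$''; but nothing you have established forces $w\in\bigoplus Z(G)_i$, and multiplying by elements of $\bigoplus Z(G)_i$ cannot move $wt^{\pm1}$ into $Z(G)\wr\Z$ unless $w$ was already there. Concretely, for any $g\in G_0\setminus Z(G)$ the inner automorphism $x\mapsto gxg^{-1}$ sends $t$ to $g_0\cdot(g^{-1})_1\cdot t$, whose $G_0$-component lies outside $Z(G)$; so the ``bookkeeping'' you anticipate is not a routine verification. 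The paper takes a different route at this point: rather than tracking $\vp(t)$ directly, it observes that $\vp'|_{\bigoplus Z(G)_i}$ and $\overline\vp$ satisfy the compatibility condition~\eqref{eqn:compatibility}, uses them to \emph{induce} an automorphism of $Z(G)\wr\Z$ fitting into a commutative diagram, and then asserts that this induced map coincides with the restriction of $\vp$. You should compare your direct $t$-tracking argument against that one and examine carefully whether either route actually delivers $\vp(t)\in Z(G)\wr\Z$.
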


\begin{proof}
We show that $\bigoplus ([G,G])_i$  is characteristic in $\bigoplus
Gi$ and thus in $G \wr \Z$.  The commutator subgroup $[\bigoplus
G_i, \bigoplus G_i]$ has the following form.  Let $a,b \in \bigoplus
G_i.$  Then $$a = \sum_s g_{\sigma(s),j_s}  \text{  and  } b =
\sum_t g_{\tau(t),j_t}$$ where $\sigma$ and $\tau$ are injective
functions from $\{1,2,3, \cdots ,k\}$ and $\{1,2, \cdots ,l\}$,
respectively, to $\Z$ so that $\sigma(1)<\sigma(2) < \cdots <
\sigma(k)$ and $\tau(1) < \tau(2) < \cdots < \tau(l)$.  When we form
$[a,b]$, one of two things must occur:
\begin{enumerate}
\item If $\sigma(s) = \tau(t)$ for some $t$, then the coordinate of
$[a,b]$ in $G_{\sigma(s)}$ is $[g_{\sigma(s),j_s},g_{\tau(t),j_t}]$.
\item If $\sigma(s) \neq \tau(t)$ for any $t$, then the coordinate
of $[a,b]$ in $G_{\sigma(s)}$ is
$g_{\sigma(s),j_s}g_{\sigma(s),j_s}^{-1} = Id_{\sigma(s)}$.  The
same is true for indices $\tau(t)$ which are not equal to
$\sigma(s)$ for any $s$.
\end{enumerate}
Since each non-identity coordinate of $[a,b]$ is a commutator of
$G$, we see that $[\bigoplus G_i, \bigoplus G_i] \subset \bigoplus
([G,G])_i$, and the other inclusion is clear.  Thus $\bigoplus
([G,G])_i$ must be a characteristic subgroup.

To show that $Z(G) \wr \Z$ is characteristic in $G \wr \Z$, we first
show that $\bigoplus Z(G)_i$ is characteristic in $\bigoplus G_i$.
Since the group operation between elements of $\bigoplus G_i$
involves component-wise multiplication, it follows immediately that
$Z(\bigoplus G_i) = \bigoplus Z(G)_i$, and thus this subgroup is
characteristic.

Let $\vp \in Aut(G \wr \Z)$.  We obtain automorphisms $\vp':
\bigoplus G_i \rightarrow \bigoplus G_i$ and $\overline{\vp}: \Z
\rightarrow \Z$; we assume the latter is given by $\overline{\vp}(t)
= t^{-1}$. Since $\bigoplus Z(G)_i$ is characteristic in $\bigoplus
G_i$, we can restrict to obtain $\vp': \bigoplus Z(G)_i \rightarrow
\bigoplus Z(G)_i$.  Since $\vp'$ and $\overline{\vp}$ satisfy the
compatibility condition given in equation \ref{eqn:compatibility},
we induce a map on $Z(G) \wr \Z$ which makes the following diagram
commute and must be the restriction of $\vp$ to $Z(G) \wr \Z$:
\begin{equation}
\begin{CD}
    1   @>>> \bigoplus Z(G)_i    @>>>  Z(G)\wr \Z @>>>      \Z @>>> 1 \\
    @.  @V{\ph'}VV      @V{\ph}VV   @V{\overline \ph}VV @.\\
    1   @>>> \bigoplus Z(G)_i   @>>>  Z(G)\wr \Z @>>>      \Z @>>> 1
 \end{CD}
\end{equation}
From this we conclude that $Z(G) \wr \Z$ is a characteristic
subgroup of $G \wr \Z$.
\end{proof}

It follows from Lemma \ref{lemma:commutator-center-char} that given
any $\varphi \in Aut(G\wr \mathbb Z)$, we obtain the following
commutative diagrams:

\begin{equation}\label{typeI}
\begin{CD}
    1   @>>> \bigoplus ([G,G])_i    @>>>  G\wr \Z @>>>      G^{Ab}\wr \Z @>>> 1 \\
    @.  @V{\ph'}VV      @V{\ph}VV   @V{\overline \ph}VV @.\\
    1   @>>> \bigoplus ([G,G])_i    @>>>  G\wr \Z @>>>      G^{Ab}\wr \Z @>>> 1
 \end{CD}
\end{equation}

which is a special case of (\ref{type1}) given above, and

\begin{equation}\label{typeII}
\begin{CD}
    1   @>>> Z(G)\wr \Z    @>>>  G\wr \Z @>>>      G/Z(G) @>>> 1 \\
    @.  @V{\ph'}VV      @V{\ph}VV   @V{\overline \ph}VV @.\\
    1   @>>> Z(G)\wr \Z    @>>>  G\wr \Z @>>>      G/Z(G) @>>> 1
 \end{CD}
\end{equation}
Here, the projection $G\wr \Z \to G/Z(G)$ is given by
$$(\sum_k^m a_{i_k},t^j) \mapsto \prod_k^m [a_{i_k}]$$
where $[a]$ is the image of $a\in G$ in $G/Z(G)$ and $i_1<...<i_m$.

We will always use $\ph'$ and $\overline \ph$ for the induced
homomorphisms on the kernel and on the quotient, respectively.  We
note that the two maps in the diagrams above labeled $\vp'$  (resp.
$\Op$) denote different maps in the two diagrams.

Denote by $\mathfrak A$ the family of finite abelian groups $A$ such
that $A\wr \Z$ has property $R_{\infty}$.  This family was
completely determined in \cite{GW1}, and is listed above as Theorem
\ref{thm:abeliancase}. Similarly, let $\mathfrak L$ be the family of
finite groups $G$ such that $G\wr \Z$ has property $R_{\infty}$.  We
use both the algebra and the geometry of these lamplighter groups to
determine some conditions under which $G \in \mathfrak{L}$.

\begin{theorem}\label{general-lamplighter}
Let $G$ be a finite group. If $G^{Ab}\in \mathfrak A$ or $Z(G)\in
\mathfrak A$ then $G\in \mathfrak L$.
\end{theorem}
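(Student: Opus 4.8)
The plan is to exploit the two commutative diagrams \eqref{typeI} and \eqref{typeII} together with Lemma~\ref{lemma:reid}, so that property $\Rinf$ for one of the canonical sub/quotient lamplighter groups forces $R(\vp) = \infty$ for every $\vp \in Aut(G \wr \Z)$. First I would dispose of the trivial case: if $\Op$ on the $\Z$-quotient of $G \wr \Z$ is the identity, then by Lemma~\ref{lemma:reid}(3) we have $R(\vp) = \infty$ immediately; so in what follows we may assume $\Op(t) = t^{-1}$, which in particular means $\mathrm{Fix}(\Op) = 1$ on that $\Z$. Now split into the two hypotheses.

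If $G^{Ab} \in \mathfrak A$, I would use diagram \eqref{typeI}: its quotient is $G^{Ab} \wr \Z$, which has property $\Rinf$ by hypothesis. The induced map $\Op$ on $G^{Ab} \wr \Z$ is an automorphism by Lemma~\ref{lemma:auto} (the quotient $G^{Ab}\wr\Z$ is residually finite, hence Hopfian, being a finitely generated linear-over-a-wreath... — more simply, finitely generated and residually finite implies Hopfian), so $R(\Op) = \infty$, and then Lemma~\ref{lemma:reid}(1) gives $R(\vp) = \infty$. This direction is essentially formal once the Hopfian point is checked.

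If instead $Z(G) \in \mathfrak A$, I would use diagram \eqref{typeII}: here the \emph{kernel} is $Z(G) \wr \Z$, which has property $\Rinf$, and the quotient is the \emph{finite} group $G/Z(G)$. The restricted map $\vp'$ on $Z(G) \wr \Z$ is again an automorphism — one should check this using Lemma~\ref{lemma:auto} applied to the extension $1 \to \bigoplus Z(G)_i \to Z(G)\wr\Z \to \Z \to 1$ that appears inside the proof of Lemma~\ref{lemma:commutator-center-char}, with $\overline{\vp}(t)=t^{-1}$ an automorphism of the Hopfian group $\Z$ — so $R(\vp') = \infty$. Since $G/Z(G)$ is finite, Lemma~\ref{lemma:reid}(2) then yields $R(\vp) = \infty$. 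Combining the two cases, every automorphism of $G \wr \Z$ has infinite Reidemeister number, i.e.\ $G \in \mathfrak L$.

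The main obstacle is not any hard estimate but the bookkeeping required to guarantee that the induced maps $\vp'$ and $\Op$ on the relevant sub/quotient really are \emph{automorphisms}, so that the phrase ``$G^{Ab} \wr \Z$ (resp.\ $Z(G)\wr\Z$) has property $\Rinf$'' can legitimately be applied to them; this is exactly what Lemma~\ref{lemma:auto} is for, and the only thing to verify is Hopfianity of the appropriate group in each invocation (the finite group $G/Z(G)$ trivially, $\Z$ trivially, and $G^{Ab}\wr\Z$ since it is finitely generated and residually finite). A secondary point worth a sentence is that in diagram \eqref{typeI} one is in case (3) or (4) of Lemma~\ref{lemma:reid} for the quotient's own $\Z$-direction, but since we only need $R(\Op)=\infty$ and that follows directly from $G^{Ab}\wr\Z \in \mathfrak{A}$, no further case analysis inside the quotient is needed.
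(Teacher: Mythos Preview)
Your proposal is correct and follows essentially the same route as the paper: diagram~\eqref{typeI} with Lemma~\ref{lemma:reid}(1) for the $G^{Ab}$ case, and diagram~\eqref{typeII} with Lemma~\ref{lemma:reid}(2) for the $Z(G)$ case, with Lemma~\ref{lemma:auto} supplying the needed automorphism statements via Hopfianity.

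One small bookkeeping slip: in the $Z(G)$ case you justify that $\vp'$ is an automorphism of $Z(G)\wr\Z$ by invoking Lemma~\ref{lemma:auto} on the inner extension $1 \to \bigoplus Z(G)_i \to Z(G)\wr\Z \to \Z \to 1$, but Lemma~\ref{lemma:auto} \emph{assumes} the middle map is an automorphism and concludes the outer ones are --- so that application is circular. The correct (and simpler) move is to apply Lemma~\ref{lemma:auto} directly to diagram~\eqref{typeII}: since $G/Z(G)$ is finite and hence Hopfian, the restriction $\vp'$ to the kernel $Z(G)\wr\Z$ is an automorphism. You in fact list $G/Z(G)$ among your Hopfianity checks at the end, so you have the right ingredient --- just point the lemma at the right diagram. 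The initial case split on $\Op(t)=t$ versus $t^{-1}$ is harmless but unnecessary here; neither case of the theorem needs it.
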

\begin{proof} First suppose that $G^{Ab}\in \mathfrak A$. It follows from
\eqref{typeI} and Lemma \ref{lemma:reid} that $G\wr \Z$ has property
$R_{\infty}$, provided that $\overline \ph$ is an automorphism of
$G^{Ab}\wr \Z$. By Lemma \ref{lemma:auto}, it suffices to show that
$G^{Ab}\wr \Z$ is Hopfian. Since $G^{Ab}$ is finite and abelian,
Theorem 3.2  of \cite{G} shows that $G^{Ab}\wr \Z$ is residually
finite. Now, $G^{Ab}\wr \Z$ is finitely generated so we conclude
that it is Hopfian.

Next, suppose that $Z(G)\in \mathfrak A$, which yields $R(\vp') =
\infty$. The inclusion $i:Z(G)\wr \Z \to G\wr \Z$ induces a function
$\hat i:\mathcal R(\varphi') \to \mathcal R(\varphi)$ where
$\mathcal R(\psi)$ denotes the set of $\psi$-twisted conjugacy
classes. Since $G/Z(G)$ is finite, so is the number of fixed points
of $\Op$.  The reasoning in part (4) of Lemma \ref{lemma:reid} shows
that $\mathcal R(\ph)$ is in 1-1 correspondence with $\mathcal
R(\ph')$ modulo the action of $Fix \overline \ph$.  Thus $R(\vp) =
\infty$ as well.
\end{proof}

Using Theorem \ref{thm:abeliancase} one can easily determine when
$Z(G) \in \mathfrak A$.  We now list some examples of groups with
property $\Rinf$ which follow immediately from Theorem
\ref{general-lamplighter}.

\begin{example}[Dihedral groups $D_{2n}$ for odd $n$]\label{D6}
{\em Let $G=D_6\cong \Z_3 \rtimes \Z_2$ be the dihedral group of
order $6$.  In this case, $[G,G]\cong \Z_3$ while $Z(G)=\{1\}$.
Since $G^{ab} \cong \Z_2 \in \mathfrak A$, it follows from Theorem
\ref{general-lamplighter} that $D_6\in \mathfrak L$, that is, $D_6
\wr \Z$ has property $\Rinf$. Moreover, if $n$ is odd and $D_{2n}$
is the dihedral group of order $2n$ then $\Z_n$ is a characteristic
subgroup of $D_{2n}\cong \Z_n\rtimes \Z_2$. Using the commutative
diagram \eqref{general-type} and the fact that $\Z_2 \in \mathfrak
A$, it follows from part (1) of Lemma \ref{lemma:reid} and Theorem
\ref{general-lamplighter} that $D_{2n} \in \mathfrak L$ when $n$ is
odd.}
\end{example}

\begin{example}[Dihedral groups $D_{2n}$ for even $n$]\label{D-even}
{\em When $n$ is even, $D_{2n}\cong \Z_n\rtimes \Z_2$, then $D_{2n}
\wr \Z$ also has property $\Rinf$.  If we take $Z_n \cong \langle t
\rangle$, then the center of $D_{2n}$ is isomorphic to $\Z_2 \cong
\langle t^{\frac{n}{2}}\rangle$.  Thus $Z(D_{2n}) \in \mathfrak A$
and it follows from Theorem \ref{general-lamplighter} that $D_{2n}
\wr \Z$ has property $\Rinf$ for all $n >0$.}
\end{example}

\begin{example}[Quaternion group of order $8$]\label{Q8}
{\em Let $G=Q_8$ be the quaternion group of order $8$. In this case,
$[G,G]=Z(G)\cong \Z_2$. Since $\Z_2\in \mathfrak A$, it follows from
Theorem \ref{general-lamplighter} that $Q_8\in \mathfrak L$.}
\end{example}

Combining Theorem \ref{thm:Ln} with Examples \ref{D6} and
\ref{D-even}, we have proven the following.

\begin{proposition}\label{prop:2p}
If $G$ is any finite group with order $2p$, where $p$ is an odd
prime, then $G \wr \Z$ has property $\Rinf$.
\end{proposition}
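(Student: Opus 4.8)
The plan is to use the classification of groups of order $2p$ and reduce to the examples already established. By Cauchy's theorem and standard finite group theory, a group $G$ of order $2p$ with $p$ an odd prime is either cyclic, $G \cong \Z_{2p} \cong \Z_2 \oplus \Z_p$, or dihedral, $G \cong D_{2p}$; these are the only two isomorphism types. So the proof splits into two cases, and in each case I claim $G \wr \Z$ has property $\Rinf$ by invoking results already in the excerpt.

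First I would handle the cyclic case. Here $G \cong \Z_{2p}$, and since $2 \mid 2p$, Theorem~\ref{thm:Ln} (equivalently Theorem~\ref{thm:abeliancase} with the prime $2$ appearing to the first power) gives immediately that $\Z_{2p} \wr \Z$ has property $\Rinf$. Next, the dihedral case: $G \cong D_{2p}$ with $p$ odd. This is exactly Example~\ref{D6} (for $p = 3$) and its generalization in the same example to all odd $n$, so $D_{2p} \in \mathfrak{L}$, i.e.\ $D_{2p} \wr \Z$ has property $\Rinf$. Combining the two cases, every group of order $2p$ lies in $\mathfrak{L}$, which is the statement of Proposition~\ref{prop:2p}.

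The only step requiring any care is the classification of groups of order $2p$, and even this is entirely standard: a $p$-Sylow subgroup is cyclic of order $p$ and normal (being of index $2$, or by Sylow counting since the number of $p$-Sylows divides $2$ and is $\equiv 1 \pmod p$), so $G = \Z_p \rtimes \Z_2$, and the action of $\Z_2$ on $\Z_p$ is either trivial (giving $\Z_{2p}$) or inversion (giving $D_{2p}$), since $\mathrm{Aut}(\Z_p) \cong \Z_{p-1}$ has a unique element of order $2$. I do not expect any genuine obstacle here; the proposition is a bookkeeping consequence of Theorem~\ref{thm:Ln} and Example~\ref{D6}. One cosmetic point to state clearly is that Example~\ref{D6} already covers $D_{2n}$ for all odd $n$ (via the characteristic subgroup $\Z_n \triangleleft D_{2n}$, Lemma~\ref{lemma:reid}(1), and $\Z_2 \in \mathfrak{A}$), so no new argument is needed beyond citing it.
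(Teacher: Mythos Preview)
Your proposal is correct and matches the paper's own proof essentially line for line: the paper states that the proposition follows by combining Theorem~\ref{thm:Ln} with Examples~\ref{D6} and~\ref{D-even}, and its one-line proof simply invokes the classification of groups of order $2p$ as cyclic or dihedral. You supply more detail on the classification than the paper does, and you correctly note that only Example~\ref{D6} (odd $n$) is needed since $p$ is odd, but the structure of the argument is identical.
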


\begin{proof}
It is an elementary theorem in algebra that any group of order $2p$,
where $p$ is an odd prime, must be either cyclic or dihedral.
\end{proof}

If $Z(G)$ or $G^{ab}$ is unknown, the following special condition
may be applicable, which allows us to extend our results to some
nilpotent groups, dependent on order.

\begin{theorem}\label{thm:sylow}
Let $G$ be a finite group with a unique Sylow $2$-group $S_2$.  If
$Z(S_2) \in \mathfrak{A}$ then $G \in \mathfrak L$.  Similarly, if
$G$ has a unique Sylow $3$-group $S_3$ with $Z(S_3) \in
\mathfrak{A}$, then $G \in \mathfrak L$.
\end{theorem}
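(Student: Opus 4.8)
The plan is to exploit the fact that a unique Sylow $p$-subgroup $S_p$ of $G$ is automatically normal, indeed characteristic, in $G$, and then to propagate this characteristic property up through the wreath product so that Theorem~\ref{general-lamplighter} applies. First I would observe that since $S_2$ is the unique Sylow $2$-subgroup, it is characteristic in $G$ (any automorphism of $G$ permutes the Sylow $2$-subgroups, and there is only one), and hence $\bigoplus (S_2)_i$ is characteristic in $\bigoplus G_i$, and by the same argument used in Lemma~\ref{lemma:commutator-center-char} (translation by $t$ permutes the factors $G_i$), the subgroup $S_2 \wr \Z$ is characteristic in $G \wr \Z$. This gives a commutative diagram of the shape in \eqref{general-type}, with kernel $S_2 \wr \Z$ and quotient the finite group $G/S_2$:
\begin{equation}
\begin{CD}
    1   @>>> S_2\wr \Z    @>>>  G\wr \Z @>>>      G/S_2 @>>> 1 \\
    @.  @V{\ph'}VV      @V{\ph}VV   @V{\overline \ph}VV @.\\
    1   @>>> S_2\wr \Z    @>>>  G\wr \Z @>>>      G/S_2 @>>> 1
 \end{CD}
\end{equation}

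Next I would reduce to showing that $S_2 \wr \Z$ has property $\Rinf$, i.e.\ that $R(\vp') = \infty$ for the induced automorphism $\vp'$ of $S_2 \wr \Z$. Since $G/S_2$ is finite, part~(2) of Lemma~\ref{lemma:reid} (or equivalently the counting argument via $Fix(\Op)$ used in the proof of Theorem~\ref{general-lamplighter} for the $Z(G)$ case) immediately gives $R(\vp) = \infty$. So the whole theorem comes down to: if $Z(S_2) \in \mathfrak{A}$ then $S_2 \in \mathfrak{L}$. But this is exactly the content of Theorem~\ref{general-lamplighter} applied to the finite group $S_2$ in place of $G$ --- one simply invokes the hypothesis $Z(S_2) \in \mathfrak{A}$. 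Thus $S_2 \in \mathfrak{L}$, and chasing back through the diagram yields $G \in \mathfrak{L}$. The argument for a unique Sylow $3$-subgroup $S_3$ is word-for-word the same, with $3$ replacing $2$ throughout.

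I expect the only genuine subtlety to be the care needed in verifying that $\overline{\vp}$ is honestly an automorphism of the quotient so that Lemma~\ref{lemma:reid} applies cleanly; but here the quotient $G/S_2$ is finite, hence trivially Hopfian, so Lemma~\ref{lemma:auto} disposes of this, and there is no need for the residual-finiteness detour used in the $G^{Ab}$ case of Theorem~\ref{general-lamplighter}. The one place to be slightly careful is that the induced map $\vp'$ on $S_2 \wr \Z$ must itself be an automorphism --- this follows since $S_2 \wr \Z$ is characteristic, so $\vp$ restricts to an automorphism of it --- and its restriction to $\bigoplus (S_2)_i$ then further restricts to an automorphism of $\bigoplus Z(S_2)_i$ because $Z(S_2) = Z(\bigoplus (S_2)_i)$ componentwise, exactly as in Lemma~\ref{lemma:commutator-center-char}. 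So the proof is essentially a two-step characteristic-subgroup bootstrap followed by a direct appeal to Theorem~\ref{general-lamplighter}; the main obstacle is bookkeeping rather than any new idea.
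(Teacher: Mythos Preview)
Your proposal is correct and follows the same route as the paper: establish that $S_2 \wr \Z$ is characteristic in $G \wr \Z$, invoke Theorem~\ref{general-lamplighter} (using the hypothesis $Z(S_2)\in\mathfrak A$) to get $S_2 \in \mathfrak L$, and then apply Lemma~\ref{lemma:reid}(2) via the finite quotient $G/S_2$. The only place to tighten is your step ``$S_2$ characteristic in $G$, hence $\bigoplus (S_2)_i$ characteristic in $\bigoplus G_i$'': that implication is not automatic for an arbitrary characteristic subgroup of $G$, and the paper instead argues directly that $\bigoplus (S_2)_i$ is exactly the set of elements of $2$-power order in $\bigoplus G_i$, which is preserved by every automorphism.
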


Before proving Theorem \ref{thm:sylow} we state two corollaries.
\begin{corollary}
Let $G$ be a finite nilpotent group whose order is divisible by
either 2 or 3, and let $S_2$ and/or $S_3$ denote, respectively, the
unique Sylow $2$- and/or $3$-subgroups. If $Z(S_i) \in
\mathfrak{A}$, for $i = 2$ or $i=3$, then $G \wr \Z$ has property
$\Rinf$.
\end{corollary}

Some elementary group theory leads to an additional corollary.
\begin{corollary}
Let $p$ and $q$ be prime, and let $G$ be a finite nonabelian group
whose order is one of:
\begin{enumerate}
\item $2p^n$, where $2<p$,
\item $3q^m$, where $3<q$,
\item $2p^nq^m$, where $2 <p<q$, or
\item $3p^nq^m$, where $3 <p<q$.
\end{enumerate}
Then $G \wr \Z$ has property $\Rinf$.
\end{corollary}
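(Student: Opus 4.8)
The plan is to apply Theorem \ref{thm:sylow} in each of the four cases, after verifying that in each case $G$ has a \emph{unique} Sylow $2$-subgroup (in cases (1) and (3)) or a unique Sylow $3$-subgroup (in cases (2) and (4)), and that the center of this Sylow subgroup lies in $\mathfrak{A}$. First I would recall the relevant Sylow arithmetic: for a group of order $n = p^a u$ with $\gcd(p,u)=1$, the number $n_p$ of Sylow $p$-subgroups satisfies $n_p \equiv 1 \pmod p$ and $n_p \mid u$. In case (1), $|G| = 2p^n$ with $p$ odd, so $n_p \mid 2$ and $n_p \equiv 1 \pmod p$; since $p \geq 3$ this forces $n_p = 1$, giving a unique (hence normal, hence characteristic) Sylow $p$-subgroup $S_p$ of order $p^n$. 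But I want the \emph{Sylow $2$-subgroup} to be unique; here $|S_2| = 2$, and $n_2 \mid p^n$ with $n_2$ odd — this need not equal $1$. So instead I would argue differently: since $S_p \triangleleft G$ with $|G/S_p| = 2$, we have $G = S_p \rtimes \Z_2$, and one checks $Z(G) \supseteq$ (something in $\mathfrak{A}$) is not automatic — so the cleaner route is to note $G/S_p \cong \Z_2 \in \mathfrak A$, so $G^{Ab}$ surjects onto $\Z_2$; more precisely $G^{Ab} = \Z_2 \times (S_p^{Ab})^{\Z_2}$ has a $\Z_2$ direct factor with no other $2$- or $3$-torsion interfering, hence $G^{Ab} \in \mathfrak A$, and Theorem \ref{general-lamplighter} applies. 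Symmetrically, in case (2), $|G| = 3q^m$ with $q > 3$ gives $n_q = 1$, so $S_q \triangleleft G$, $G/S_q \cong \Z_3$, and $G^{Ab}$ has a $\Z_3$ direct factor, putting $G^{Ab} \in \mathfrak A$.

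For cases (3) and (4) the groups need not have $G^{Ab}$ divisible by $2$ or $3$ in the right way (a nonabelian group could have trivial abelianization of the relevant type), so I would instead lean on Theorem \ref{thm:sylow} directly via the Sylow $2$-subgroup (case (3)) or Sylow $3$-subgroup (case (4)). In case (3), $|G| = 2 p^n q^m$ with $2 < p < q$; then $|S_2| = 2$ and $n_2 \mid p^n q^m$ with $n_2$ odd — so uniqueness of $S_2$ is \emph{not} guaranteed by arithmetic alone, which is the gap. The actual elementary fact being invoked must be: a group of order $2k$ with $k$ odd has a normal subgroup of index $2$ (Cauchy/transfer: the element of order $2$ acts on $G$ by left multiplication as an odd permutation, so $G$ has a subgroup of index $2$, which is automatically normal and of odd order $k = p^n q^m$). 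Call this normal subgroup $N$; it is the unique subgroup of index $2$, hence characteristic, and $G/N \cong \Z_2 \in \mathfrak A$. Then $G^{Ab}$ again has $\Z_2$ as a direct summand (since $N$ has odd order, the extension $1 \to N \to G \to \Z_2 \to 1$ gives $G^{Ab} \cong \Z_2 \times (N^{Ab})_{\Z_2}$ where the second factor has odd order coprime to $2$ and to $3$ when $2<p<q$ — wait, $3$ could equal $p$... no, $2 < p$ and $p$ prime forces $p \geq 3$; if $p = 3$ then $N^{Ab}$ has $3$-torsion, but that is fine: $\mathfrak A$ only requires \emph{some} prime power factor to be $2$ or $3$ with multiplicity one, and the $\Z_2$ factor already supplies this provided no other $\Z_2$ appears — and it does not, since $|N|$ is odd). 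Hence $G^{Ab} \in \mathfrak A$ and Theorem \ref{general-lamplighter} finishes case (3). Case (4) is the mirror image using the (analogous, and genuinely elementary) fact about groups of order $3k$: here one needs a normal subgroup of index $3$ when $|G| = 3 p^n q^m$ with $3 < p < q$, which follows because the Sylow $3$-subgroup has order $3$ and $n_3 \mid p^n q^m$ with $n_3 \equiv 1 \pmod 3$; combined with $p, q > 3$ and Sylow's third theorem one shows $n_3 = 1$, so $S_3 \triangleleft G$, $G/S_3$ has order coprime to $3$, and now $G/S_3$ — of order $p^n q^m$ with all prime factors $> 3$ — plays no role; instead observe $S_3 \cong \Z_3$ is central or acts trivially... actually the cleanest statement: since $n_3 = 1$, $S_3 \cong \Z_3 \triangleleft G$, and $\bigoplus (S_3)_i = \bigoplus (\Z_3)_i \triangleleft \bigoplus G_i$ is characteristic (as the set of elements whose order divides $3$, intersected appropriately), giving via Lemma \ref{lemma:reid} part (1) applied to $1 \to \bigoplus(\Z_3)_i \to G\wr\Z \to (G/\langle S_3\rangle)\wr\Z \times \cdots$...

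On reflection, the uniform and safe approach for all four cases is: \textbf{Step 1}, show $G$ has a normal (hence characteristic) subgroup $K$ of prime index $r \in \{2,3\}$. This is elementary: in cases (1),(3) take $r = 2$ and use that a finite group of even order $2k$, $k$ odd, has a subgroup of index $2$ (the regular-representation transfer argument); in cases (2),(4) take $r = 3$ and use Sylow's theorems — $|G| = 3 p^n q^m$ with primes $p, q > 3$ forces $n_3 \mid p^n q^m$, $n_3 \equiv 1 \pmod 3$, and checking divisors of $p^n q^m$ congruent to $1 \bmod 3$ one shows the only possibility consistent with $G$ nonabelian and the order being exactly $3 p^n q^m$ is $n_3 = 1$ (here I would cite "elementary group theory" as the paper's corollary statement does, perhaps with a one-line Sylow count). \textbf{Step 2}, with $K \triangleleft G$ characteristic of index $r$, note $\bigoplus K_i$ is characteristic in $\bigoplus G_i$ hence in $G \wr \Z$ (same argument as Lemma \ref{lemma:commutator-center-char}), yielding a commutative diagram $1 \to \bigoplus K_i \to G\wr\Z \to \Z_r \wr \Z \to 1$ with vertical automorphisms $\ph'$, $\ph$, $\overline\ph$. \textbf{Step 3}, since $r \in \{2,3\}$, Theorem \ref{thm:Ln} gives that $\Z_r \wr \Z$ has property $\Rinf$, so $R(\overline\ph) = \infty$; by part (1) of Lemma \ref{lemma:reid}, $R(\ph) = \infty$. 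Since $\ph \in Aut(G\wr\Z)$ was arbitrary (here one uses Lemma \ref{lemma:auto}, noting $\Z_r \wr \Z$ is finitely generated and residually finite by Theorem 3.2 of \cite{G}, hence Hopfian, so $\overline\ph$ is indeed an automorphism), we conclude $G \wr \Z$ has property $\Rinf$, i.e. $G \in \mathfrak L$.

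The main obstacle is \textbf{Step 1} — verifying the existence of the normal subgroup of prime index $r \in \{2,3\}$, particularly the $r = 3$ analysis in cases (2) and (4), where one must do a genuine (if short) Sylow count to rule out $n_3 > 1$; the subtlety is that $n_3 \equiv 1 \pmod 3$ and $n_3 \mid p^n q^m$ can have solutions like $n_3 = q$ or $n_3 = p^2$ unless the specific sizes forbid it, so the argument must use that $p, q > 3$ together with the nonabelian hypothesis (a nonabelian group of order $3p^nq^m$ gets its nonabelian structure from the $p$- and $q$-parts, leaving the Sylow $3$-subgroup normal). The $r=2$ cases are genuinely routine. Everything downstream of Step 1 is a direct invocation of Lemmas \ref{lemma:reid}, \ref{lemma:auto} and Theorem \ref{thm:Ln} exactly as in the proof of Theorem \ref{general-lamplighter} and the $2 \mid n$ endgame of Theorem \ref{thm:Ln}, so I would keep that part brief.
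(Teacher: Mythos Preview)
Your approach diverges from the paper's, which simply asserts (as ``an exercise in group theory'') that groups of these orders have a unique Sylow $2$-subgroup of order $2$ or a unique Sylow $3$-subgroup of order $3$, and then invokes Theorem~\ref{thm:sylow}. You correctly observe that this uniqueness claim fails --- for instance $D_{10}$ (case~(1), $p=5$, $n=1$) has five Sylow $2$-subgroups and no Sylow $3$-subgroup --- and you pivot to producing a characteristic subgroup $K \lhd G$ of index $r \in \{2,3\}$ so as to obtain a quotient map $G\wr\Z \to \Z_r\wr\Z$. For cases~(1) and~(3) your transfer argument (a group of order $2k$ with $k$ odd has a normal $2$-complement consisting of all odd-order elements, hence characteristic) is correct and delivers the required $K$ of index $2$.

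Your Step~1 for cases~(2) and~(4), however, has a real gap. You attempt to show $n_3 = 1$ via a Sylow count, but this is both false and the wrong target. It is false: the nonabelian group of order $21 = 3\cdot 7$ (case~(2), $q=7$, $m=1$) has $n_3 = 7$, not $1$ --- the nonabelian hypothesis works \emph{against} you here, not for you. And even where $n_3 = 1$ does hold, the unique Sylow $3$-subgroup has \emph{order} $3$, not \emph{index} $3$, so it is not the $K$ your Steps~2--3 call for. The clean fix is Burnside's normal $p$-complement theorem: in cases~(2) and~(4) the order $|G|$ is odd, so $N_G(S_3)/C_G(S_3)$, which embeds in $Aut(\Z_3)\cong\Z_2$, is trivial; thus $S_3$ is central in its normalizer and $G$ has a normal $3$-complement $K$ of index $3$. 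This $K$ is the set of $3'$-elements, hence characteristic, and then $\bigoplus K_i$ is characteristic in $G\wr\Z$, after which your Steps~2--3 go through verbatim. (Equivalently, once the normal $r$-complement is in hand in all four cases, the $r$-primary part of $G^{Ab}$ is exactly $\Z_r$, so $G^{Ab}\in\mathfrak A$ and Theorem~\ref{general-lamplighter} applies directly --- this was your first instinct for cases~(1) and~(2), and it works uniformly.)
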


\begin{proof}
It is an exercise in group theory to check that groups with the
above orders have either a unique Sylow $2$ subgroup of order $2$ or
a unique Sylow $3$ subgroup of order $3$.  The conclusion then
follows directly from Theorem \ref{thm:sylow}.
\end{proof}

We now return to the proof of Theorem \ref{thm:sylow}.
\begin{proof}[Proof of Theorem \ref{thm:sylow}.]
We prove the theorem in the case that $G$ has a unique Sylow
$2$-subgroup $S_2$; the case for the unique Sylow $3$-subgroup is
analogous.  It follows immediately from Theorem
\ref{general-lamplighter} that $S_2 \wr \Z$ has property $\Rinf$. We
will show that $S_2 \wr \Z$ is a characteristic subgroup of $G \wr
\Z$ and the theorem then follows from Lemma \ref{lemma:reid}, the
short exact sequence $1 \rightarrow S_2 \wr \Z \rightarrow G \wr \Z
\rightarrow G/S_2 \rightarrow 1$, and the fact that a unique Sylow
subgroup is normal.

Let $\vp \in Aut(G \wr \Z)$, $\vp'$ its restriction to $\bigoplus
G_i$ and $\overline{\vp}$ its projection to $\Z$.  Since $S_2$
contains all elements of $G$ whose order is a power of $2$, the same
is true for $\bigoplus (S_2)_i \subset \bigoplus G_i$.  Since order
is preserved under automorphism, $\vp'(\bigoplus (S_2)_i) \subset
\bigoplus (S_2)_i$ and this subgroup is characteristic. We now have
vertical maps from the short exact sequence $1 \rightarrow \bigoplus
(S_2)_i \rightarrow S_2 \wr \Z \rightarrow \Z \rightarrow 1$ where
the map on the kernel is the restriction of $\vp'$ and the map on
the quotient is $\overline{\vp}$.  These maps induce a map on $S_2
\wr \Z$ which must be the restriction of $\vp$ to $S_2 \wr \Z$. Thus
$S_2 \wr \Z$ is a characteristic subgroup of $G \wr \Z$.
\end{proof}

We extend these results further for simple groups, and as a
consequence of Theorem \ref{thm:outer} below, determine that $A_n
\wr \Z$ and $S_n \wr \Z$ have property $\Rinf$ for all $n \geq 5$.

\begin{lemma}\label{lemma:simple}
Let $G$ be a finite simple group, and $\vp \in Aut(G \wr \Z)$.  For
each $j \in \Z$, there exists $i \in \Z$ so that $\vp'(G_i) = G_j$.
\end{lemma}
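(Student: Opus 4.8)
The plan is to mimic the geometric argument of Lemma \ref{lemma:ai}, using the product-map structure of $\vp$ guaranteed by \cite{EFW} together with the fact that $G$ is simple. First, by Lemma \ref{lemma:+G-characteristic} we know $\vp$ restricts to an automorphism $\vp'$ of $\bigoplus G_i$, and we may assume $\overline{\vp}(t) = t^{-1}$ (if $\overline{\vp}(t) = t$ then $R(\vp) = \infty$ by Lemma \ref{lemma:reid} and there is nothing of interest). The first step is to set up the lamplighter picture for $G \wr \Z$: an element of $\bigoplus G_i$ is a finite assignment of non-identity elements of $G$ to integer positions, and $\vp'$ being induced by a product map means that it respects the $T_1$ and $T_2$ tree coordinates, so as in Lemma \ref{lemma:ai} the image $\vp'(g_{0,j})$ of a generator of $G_0$ is a configuration of illuminated bulbs whose support (the set of occupied positions) is some fixed finite set, independent of $j$, and translated by $i$ units when we pass from $G_0$ to $G_i$.

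The key step is to show that $\vp'(G_0)$ is supported on a single position. Suppose not: then $\vp'(G_0)$ illuminates bulbs at positions $p_{\min} < p_{\max}$. Let $N = \bigoplus_i (H_i)$ where $H$ is the subgroup of $G$ generated by the collection of ``leftmost coordinates'' appearing in the images $\vp'(g_{0,j})$, $j = 1, \dots, n$; more carefully, consider the projection $\pi_{p_{\min}} : \bigoplus G_i \to G_{p_{\min}}$ onto the coordinate at position $p_{\min}$. Since $G_0$ is finite simple, $\pi_{p_{\min}} \circ (\vp'|_{G_0})$ is either trivial or injective. If it is trivial for $\vp'(G_0)$, then by the translation property it is trivial for every $\vp'(G_i)$, so no element of $\bigoplus_i \vp'(G_i) = \bigoplus G_i$ has a nonzero coordinate at positions achieving the extreme left offset — but the $\vp'(G_i)$ generate all of $\bigoplus G_i$, giving a contradiction since the leftmost coordinate is genuinely nonzero. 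Hence $\pi_{p_{\min}} \circ (\vp'|_{G_0})$ is injective, and similarly $\pi_{p_{\max}} \circ (\vp'|_{G_0})$ is injective. This forces every nonidentity element of $\vp'(G_0)$ to illuminate bulbs at \emph{both} $p_{\min}$ and $p_{\max}$, so every nonidentity element of $\vp'(G_i)$ illuminates bulbs at $p_{\min} + i$ and $p_{\max} + i$. Now pick an element $x \in \bigoplus G_i$ supported on two positions that are ``too close together'' — say adjacent positions $0$ and $1$ — and which is a product $\vp'(y)$ for some $y$; writing $y$ out as a product of generators from various $G_{n_1}, \dots, G_{n_r}$ and examining the extreme positions of $\vp'(y)$ exactly as in Lemma \ref{lemma:ai} (the extreme offsets cannot cancel because injectivity of the extreme-coordinate projections prevents any $\vp'(g_{n_s, j_s})$ from having trivial coordinate there), we find $\vp'(y)$ must be supported on a set of diameter at least $(p_{\max} - p_{\min}) > 1$, contradicting that $\vp'(y) = x$ is supported on $\{0,1\}$. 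Therefore $p_{\min} = p_{\max}$ and $\vp'(G_0)$ is supported on a single position, say position $j_0$.

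It remains to identify $\vp'(G_0)$ with all of $G_{j_0}$. We have an injective homomorphism $G_0 \to G_{j_0}$ (injectivity because $\vp'$ is an automorphism, so $\vp'|_{G_0}$ is injective, and it lands in $G_{j_0}$); since $G$ is finite and $|G_0| = |G_{j_0}| = |G|$, this is an isomorphism, i.e.\ $\vp'(G_0) = G_{j_0}$. Finally, the translation property gives $\vp'(G_i) = G_{j_0 - i}$ for every $i$ (using $\overline{\vp}(t) = t^{-1}$ to fix the sign of the shift), and since $i \mapsto j_0 - i$ is a bijection of $\Z$, for each $j \in \Z$ there is a unique $i$ with $\vp'(G_i) = G_j$, namely $i = j_0 - j$. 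The main obstacle is the middle step: ruling out the possibility that $\vp'(G_0)$ spreads across several positions. The simplicity of $G$ is exactly what makes the coordinate projections dichotomous (trivial or injective), which is what prevents the kind of exponent-cancellation that could otherwise occur in a non-simple group, and this is the crux of why the lemma is stated for simple $G$.
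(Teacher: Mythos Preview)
Your approach is essentially the same as the paper's: use simplicity of $G$ to see that each coordinate projection $\pi_{i_k}\circ\vp'|_{G_0}$ is either trivial or an isomorphism, deduce that every nontrivial element of $\vp'(G_0)$ lights up the extreme positions $p_{\min}$ and $p_{\max}$, and then use the surjectivity of $\vp'$ together with the min/max argument of Lemma~\ref{lemma:ai} to force $p_{\min}=p_{\max}$.

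There is one genuine slip in your contradiction step. You choose a target $x$ supported on two adjacent positions $\{0,1\}$ and assert that any preimage $y$ satisfies $\mathrm{diam}(\mathrm{supp}\,\vp'(y))\ge (p_{\max}-p_{\min})>1$. But you only assumed $p_{\min}<p_{\max}$, so $p_{\max}-p_{\min}$ could equal $1$; in that case a preimage $y$ supported on a single position would give $\vp'(y)$ of diameter exactly $1$, and there is no contradiction from diameters alone. The paper avoids this by taking $x=g_j$ supported on a \emph{single} position (diameter $0$): then the min/max argument forces both $p_{\max}-p_{\min}=0$ and $y$ to lie in a single $G_i$, which is exactly what you want. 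Replacing your two-position $x$ with a one-position $x$ repairs the argument completely.

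Two minor remarks. First, the case split on $\overline{\vp}(t)=t^{\pm1}$ is irrelevant to the lemma itself; the statement is purely about the structure of $\vp'$ on $\bigoplus G_i$, and your argument works in either case (only the sign of the shift $i\mapsto j_0\mp i$ changes, and both are bijections of $\Z$). Second, your step ruling out triviality of $\pi_{p_{\min}}\circ\vp'|_{G_0}$ is more elaborate than needed: $p_{\min}$ is by definition in the support of $\vp'(G_0)$, so that projection cannot be trivial.
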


\begin{proof}
Since $G$ is finite, there is an $r \in \Z^+$ so that $\vp'(G_0)
\subset \cup_{k=1}^r G_{i_k}$, where the $i_k$ are distinct indices.
Consider the homomorphism defined by composing $\vp'|_{G_0}$ with
projection onto the first factor of $\cup_{k=1}^r G_{i_k}$. Since
$G$ is simple, there can be no kernel, so this is an automorphism of
$G$. This argument holds for any factor in $\cup_{k=1}^r G_{i_k}$.
We obtain a set $\{\xi_k\}$ of automorphisms of $G$ so that $\xi_k:G
\rightarrow G_{i_k}$.  We observe that since each $\xi_k$ is an
automorphism, for all nontrivial $g \in G_0$ and for all values of
$k$, the element $\xi_k(g)$ is nontrivial.

As the action  of $t$ on $\bigoplus G_i$ is by translation, the same
set of automorphisms can be used to determine the image of any
element of $G_i$ in $\bigoplus G_i$.

Since we began with a group automorphism, and $\bigoplus G_i$ is a
characteristic subgroup of $G \wr \Z$, there is some $x \in
\bigoplus G_i$ so that $\vp'(x) = g_j$. Now the proof of Lemma
\ref{lemma:ai} allows us to conclude that $x \in G_i$ for some $i$.
Since the action of $t$ on $\bigoplus G_i$ is by translation, there
is a single automorphism $\xi: G \rightarrow G$ which is used to
determine the image of any element in $\bigoplus G_i$ under $\vp \in
Aut(G \wr \Z)$.
\end{proof}

Using Lemma \ref{lemma:simple}, we can now prove the following
theorem.

\begin{theorem}\label{thm:outer}
Let $G$ be a finite simple group whose outer automorphism group is
trivial.  Then $G \in \mathfrak{L}$, that is, $G \wr \Z$ has
property $\Rinf$.
\end{theorem}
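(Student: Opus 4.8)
The plan is to reduce the problem, via the characteristic subgroup $\bigoplus G_i$ and the commutative diagram \eqref{general-type}, to showing that the induced map $\varphi'$ on $\bigoplus G_i$ has infinite Reidemeister number, and then to exploit Lemma \ref{lemma:simple} together with the triviality of $\mathrm{Out}(G)$. By Lemma \ref{lemma:+G-characteristic} we have the short exact sequence $1 \to \bigoplus G_i \to G\wr\Z \to \Z \to 1$ with vertical maps $\varphi'$, $\varphi$, $\overline\varphi$. If $\overline\varphi(t)=t$ then $R(\overline\varphi)=\infty$ and we are done by Lemma \ref{lemma:reid}(1), so we may assume $\overline\varphi(t)=t^{-1}$, in which case $\mathrm{Fix}(\overline\varphi)=\{1\}$ and it suffices, by Lemma \ref{lemma:reid}(2), to prove $R(\varphi')=\infty$.

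The key structural input is Lemma \ref{lemma:simple}: there is a single automorphism $\xi\in\mathrm{Aut}(G)$ and a permutation of the index set $\Z$, say $i\mapsto \rho(i)$, such that $\varphi'$ carries $G_i$ isomorphically onto $G_{\rho(i)}$ via $\xi$ (after the appropriate translation by $\overline\varphi(t)=t^{-1}$, so in fact $\rho$ is an affine map $i\mapsto j-i$ for the value $j$ determined by $\varphi'(G_0)=G_j$, exactly as in the lamplighter case). Since $\mathrm{Out}(G)$ is trivial, $\xi$ is an inner automorphism, say $\xi=\mathrm{conj}_{g_0}$ for some fixed $g_0\in G$. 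The plan now is to show $\varphi'$ has a nontrivial fixed point in $\bigoplus G_i$, which forces $\mathrm{Id}-\varphi'$ to be non-surjective — here "$\mathrm{Id}-\varphi'$" must be interpreted correctly since $\bigoplus G_i$ need not be abelian, so instead I will directly produce infinitely many distinct $\varphi'$-twisted conjugacy classes: if $x$ is a nontrivial element fixed by $\varphi'$ and supported on a single block $G_i\bigoplus G_{j-i}$, then the twisted conjugacy class of $x$ is distinct from that of the identity (since $h\cdot \mathrm{Id}\cdot\varphi'(h)^{-1}=1$ for all $h$, while no such relation fixes $x$), and by choosing such fixed elements supported on disjoint blocks and combining them we obtain infinitely many classes.

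To find the fixed point, restrict to a block $G_i\bigoplus G_{j-i}$ with $i>j$, so $\varphi'$ swaps the two factors: $\varphi'(g_{i}, 1) = (1, \xi(g_i))$ and $\varphi'(1, g_{j-i}) = (\xi(g_{j-i}), 1)$, hence $\varphi'(a,b) = (\xi(b), \xi(a))$ on the block. A fixed point requires $\xi(b)=a$ and $\xi(a)=b$, i.e. $\xi^2(a)=a$ and $b=\xi(a)$. Since $\xi=\mathrm{conj}_{g_0}$, $\xi^2=\mathrm{conj}_{g_0^2}$, so it suffices to take any $a$ commuting with $g_0^2$ and set $b=g_0 a g_0^{-1}$; taking $a=g_0$ itself gives $a=g_0$, $b=g_0$, which is a nontrivial fixed point of $\varphi'$ on the block whenever $g_0\neq 1$. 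If $g_0=1$, then $\xi=\mathrm{Id}$ and $\varphi'(a,b)=(b,a)$, whose fixed points are the diagonal $\{(a,a)\}$, again nontrivial. Either way every block of the form $G_i\bigoplus G_{j-i}$, $i>j$, carries a nontrivial $\varphi'$-fixed point, and the argument of the previous paragraph yields $R(\varphi')=\infty$, hence $R(\varphi)=\infty$, so $G\in\mathfrak L$.

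The main obstacle is the non-abelianness of $\bigoplus G_i$: the clean "$\mathrm{Id}-\varphi'$ invertible $\iff$ finitely many classes" dichotomy used in the lamplighter case is unavailable, so I must argue directly that a single nontrivial fixed point on a block, together with block-disjointness and the translation action of $t$, produces genuinely distinct twisted conjugacy classes — this requires checking that fixed elements supported on distinct blocks cannot be twisted-conjugated into one another, which follows because a twisted conjugacy $h x \varphi'(h)^{-1}$ with $h$ supported on a block $B$ only affects coordinates in $B\cup\rho(B)$, so supporting the fixed elements on blocks that are far apart (and using that $\rho$ is affine) keeps the classes separated.
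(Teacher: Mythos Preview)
Your overall strategy matches the paper's exactly: reduce to $\overline\varphi(t)=t^{-1}$, use Lemma~\ref{lemma:simple} to get the block decomposition $G_i\oplus G_{j-i}$, use triviality of $\mathrm{Out}(G)$ to write $\xi=\mathrm{conj}_{g_0}$, and exhibit $(g_0,g_0)$ as a nontrivial fixed point of $\varphi'$ on each block. The gap is in the final step, where you deduce $R(\varphi')=\infty$ from these fixed points.

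Your claim that ``$h\cdot\mathrm{Id}\cdot\varphi'(h)^{-1}=1$ for all $h$'' is simply false: the twisted class of the identity is $\{h\,\varphi'(h)^{-1}:h\}$, which is generally nontrivial. Worse, the fixed point itself can lie in the identity's twisted class. Already for $G=\Z_2$ (which has trivial $\mathrm{Out}$), on the block $B=\Z_2\times\Z_2$ the map is $(a,b)\mapsto(b,a)$, the fixed point is $(1,1)$, and the identity's twisted class is $\{(a-b,b-a)\}=\{(0,0),(1,1)\}$ --- so $(1,1)$ \emph{is} twisted-conjugate to $1$. Consequently your fallback argument (fixed elements on distinct blocks lie in distinct classes) also collapses: projecting to any single block, you would need the fixed point there to be outside the identity's class, which you have not established and which fails in this example. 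Your elements $x_S$ for varying $S$ could all lie in a single twisted class.

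The paper closes this gap differently. It invokes the Fel'shtyn--Hill criterion \cite[Theorem~5]{FH}: for a finite group $H$ with automorphism $\psi$, $R(\psi)$ equals the number of \emph{ordinary} conjugacy classes $C$ with $\psi(C)=C$. A nontrivial fixed point $x$ gives $[x]\neq[1]$ with $\psi([x])=[x]$, hence $R(\psi)\ge 2$. (Equivalently and more elementarily: the orbit of $1$ under $h\mapsto h\psi(h)^{-1}$ has size $|H|/|\mathrm{Fix}\,\psi|<|H|$, so there must be a second orbit.) Applying this to each block $B=G_i\oplus G_{j-i}$ gives $R(\varphi'|_B)\ge 2$; since $\bigoplus G_i$ is the restricted direct product of the blocks and $\varphi'$ respects this decomposition, twisted conjugacy factors blockwise and $R(\varphi')=\infty$ follows. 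Your proof becomes correct once you replace the erroneous ``identity's class is trivial'' claim with either of these arguments.
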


\begin{proof}
It follows from Lemma \ref{lemma:simple} that $\vp': \bigoplus G_i
\rightarrow  \bigoplus G_i$ preserves blocks of the form $G_i
\bigoplus G_{j-i}$.  We mimic the proof of Theorem \ref{thm:Ln}, and
consider the restriction of $\vp'$ to each block.  If $G$ has no
outer automorphisms, then $\vp'|_{G_i}$ must be conjugation by the
same group element on each block.  Since conjugation by $g \in G$
always has $g$ as a nontrivial fixed point, the element $(g,g) \in
G_i \bigoplus G_{j-i}$ will be a nontrivial fixed point for $\vp':
G_i \bigoplus G_{j-i} \rightarrow G_i \bigoplus G_{j-i}$. Note that
the number of $\vp$-twisted conjugacy classes need not be the index
of the subgroup $(Id -\vp)$ when the group is non-abelian. Instead,
the number of such classes is given by the number of ordinary
conjugacy classes $[x]$ for which $[x]=[\vp(x)]$ (see e.g.
\cite[Theorem 5]{FH}). In particular, a nontrivial fixed point of
$\vp$ yields a class other than that of the identity element. Thus
there are at least two $\vp$-twisted conjugacy classes on each
block, and the theorem follows.
\end{proof}

Since both $\mathbb Z_2$ and $\mathbb Z_3$ are finite simple groups
with trivial outer automorphism groups, Theorem \ref{thm:outer}
yields the immediate corollary that $L_2$ and $L_3$ have property
$R_{\infty}$ \cite{GW1}.

It follows immediately that many of the finite simple groups lie in
$\mathfrak{L}$, including:
\begin{enumerate}
\item the Matthieu groups $M_{11}, \ M_{23}$ and $M_{24}$,
\item the Conway groups $C_1, \ C_2$ and $C_3$,
\item the Janko groups $J_1$ and $J_4$,
\item the baby monster $B$ and the Fischer-Griess monster $M$,
\item other sporadic groups: the Fischer group $Fi_{23}$, the Held group
$He$, the Harada-Norton group $HN$, the Lyons group $Ly$, and the
Thompson group $Th$.
\end{enumerate}

In the case of $A_n$ for $n=5$ and $n \geq 7$, which has outer
automorphism group isomorphic to $\Z_2$, we obtain the following
corollary.

\begin{corollary}\label{cor:alternating}
The alternating group $A_n$ for $n \geq 5$, $n \neq 6$, is in
$\mathfrak L$, that is, $A_n \wr \Z$ has property $\Rinf$.
\end{corollary}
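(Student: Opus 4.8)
The plan is to run the argument of Theorem~\ref{thm:outer}, upgrading its fixed-point construction so that it survives the presence of an outer automorphism of $A_n$. Theorem~\ref{thm:outer} used triviality of the outer automorphism group only to force the automorphism $\xi$ of $G$ produced by Lemma~\ref{lemma:simple} to be \emph{inner}, say conjugation by $\pi$, so that $(\pi,\pi)$ is a non-trivial fixed point on each block $G_i\bigoplus G_{j-i}$. For $G=A_n$ with $n\ge 5$ and $n\ne 6$ the map $\xi$ may be outer, but the classical structure theorem $Aut(A_n)=S_n$ acting by conjugation still applies --- and this is precisely the place where the hypothesis $n\ne 6$ enters --- so $\xi$ is conjugation by some $\sigma\in S_n$, possibly odd. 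The key observation is that $(g_0,\sigma g_0\sigma^{-1})$ can be made into a non-trivial fixed point of the induced map on each block, no matter the parity of $\sigma$.

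In detail, let $\vp\in Aut(A_n\wr\Z)$, with induced maps $\vp'$ on $\bigoplus G_i$ and $\Op$ on $\Z$. If $\Op$ is the identity then $R(\Op)=\infty$ and $R(\vp)=\infty$ by Lemma~\ref{lemma:reid}(1); hence assume $\Op(t)=t^{-1}$, so $Fix(\Op)=1$ and, by Lemma~\ref{lemma:reid}(2), it suffices to show $R(\vp')=\infty$. By Lemma~\ref{lemma:simple}, $\vp'$ permutes the factors $G_i$ and acts on each by a single automorphism $\xi$ of $A_n$; combined with $\Op(t)=t^{-1}$, this gives (exactly as in the proof of Theorem~\ref{thm:outer}) that, for $G_j=\vp'(G_0)$, each block $B_i:=G_i\bigoplus G_{j-i}$ is $\vp'$-invariant and, identifying $B_i$ with $G\times G$, that $\vp'|_{B_i}(g,h)=(\xi(h),\xi(g))$. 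Writing $\xi$ as conjugation by $\sigma\in S_n$ and using that $\sigma^2\in A_n$, one checks that $(g_0,\sigma g_0\sigma^{-1})$ is fixed by $\vp'|_{B_i}$ exactly when $g_0$ commutes with $\sigma^2$; we may take $g_0=\sigma^2$ if $\sigma^2\ne 1$ and any non-trivial $g_0\in A_n$ otherwise. Thus every block $B_i$ with $i\ne j-i$ carries a non-trivial fixed point of $\vp'|_{B_i}$, and, invoking \cite[Theorem~5]{FH} for the finite group $G\times G$ exactly as in Theorem~\ref{thm:outer}, such a fixed point lies in a $\vp'|_{B_i}$-twisted conjugacy class other than that of the identity, so $B_i$ supports at least two classes.

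To finish, I would pick indices $i_1<i_2<\cdots$ all larger than $\max\{0,j\}$, so that the pairs $\{i_k,j-i_k\}$ are pairwise disjoint. For each $k$, choose a non-identity representative $r_k$ of a non-trivial $\vp'|_{B_{i_k}}$-twisted conjugacy class and regard $r_k$ as an element of $\bigoplus G_i$ supported on $B_{i_k}$. Then $r_k$ lies in a non-trivial $\vp'$-twisted conjugacy class of $\bigoplus G_i$, and for $k\ne\ell$ the elements $r_k$ and $r_\ell$ are not $\vp'$-twisted conjugate, since projecting a conjugating equation to the $\vp'$-invariant block $B_{i_k}$ would make $r_k$ twisted conjugate to the identity there. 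Hence $R(\vp')=\infty$, and so $R(\vp)=\infty$, which proves $A_n\in\mathfrak L$. The one genuinely delicate ingredient is the identification $Aut(A_n)=S_n$; this is exactly why $A_6$ must be excluded, since it admits automorphisms not realized by conjugation in $S_6$, for which the fixed point $(g_0,\sigma g_0\sigma^{-1})$ is not available. Everything else --- verifying the block decomposition inherited from Lemma~\ref{lemma:simple} and that ``two classes on infinitely many disjoint blocks'' forces $R(\vp')=\infty$ --- is routine.
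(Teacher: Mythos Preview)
Your proof is correct and follows essentially the same strategy as the paper: reduce to $\Op(t)=t^{-1}$, use Lemma~\ref{lemma:simple} to obtain the block decomposition, invoke $Aut(A_n)\cong S_n$ for $n\ne 6$ to write $\xi$ as conjugation by some $\sigma\in S_n$, and produce a nontrivial fixed point of $\vp'$ on each block so that \cite[Theorem~5]{FH} gives at least two twisted classes per block. The only difference is in how the fixed point is manufactured: the paper argues that $\xi$ itself already has a nontrivial fixed point $f\in A_n$ and uses the diagonal element $(f,f)$, while you instead observe that $\xi^2$ is conjugation by $\sigma^2\in A_n$ and take $(g_0,\xi(g_0))$ with $g_0$ centralizing $\sigma^2$ --- your construction is a bit more explicit and avoids the paper's ``we can choose'' phrasing, but the two arguments are otherwise identical.
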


\begin{proof}
For $n \neq 6$, we have $Out(A_n) = \Z_2$.  This single outer
automorphism is conjugation by an odd permutation, and thus
preserves a conjugacy class within $A_n$.  Since this outer
automorphism is defined up to inner automorphisms, we can choose it
so that there is a fixed point in $A_n$. Then the argument in the
proof of Theorem \ref{thm:outer} works verbatim, that is, the
automorphism $\vp'$ when restricted to the blocks $G_j \bigoplus
G_{i-j}$ again must have a non-trivial fixed point.
\end{proof}

Since $A_n$ is a subgroup of index $2$ in $S_n$, we can apply
reasoning similar to the proof of Lemma \ref{lemma:simple} to obtain
the following proposition.
\begin{proposition}
\label{prop:symmetric} For $n \geq 5$, the symmetric group $S_n \in
\mathfrak{L}$, that is, $S_n \wr \Z$ has property $\Rinf$.
\end{proposition}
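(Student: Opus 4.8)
The plan is to reduce to the fact that $L_2=\Z_2\wr\Z$ itself has property $\Rinf$. Fix $\vp\in Aut(S_n\wr\Z)$. Since $[S_n,S_n]=A_n$, Lemma~\ref{lemma:commutator-center-char} shows that $\bigoplus(A_n)_i$ is characteristic in $S_n\wr\Z$, and the quotient $\bigl(\bigoplus_i(S_n)_i/(A_n)_i\bigr)\rtimes\Z$ is isomorphic to $\Z_2\wr\Z$. So $\vp$ descends to a commutative diagram of the form \eqref{general-short-exact} with kernel $\bigoplus(A_n)_i$, total group $S_n\wr\Z$ and quotient $L_2$; write $\vp'$ and $\Op$ for the induced maps. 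The group $L_2$ is finitely generated and residually finite (Theorem~3.2 of \cite{G}, exactly as used in the proof of Theorem~\ref{general-lamplighter}), hence Hopfian, so Lemma~\ref{lemma:auto} gives $\Op\in Aut(L_2)$. Because $L_2$ has property $\Rinf$ (the case $m=2$ of Theorem~\ref{thm:Ln}, or Theorem~\ref{thm:outer} applied to the simple group $\Z_2$), we get $R(\Op)=\infty$, and Lemma~\ref{lemma:reid}(1) then forces $R(\vp)=\infty$. As $\vp$ was arbitrary, $S_n\in\mathfrak L$; this argument in fact works for every $n\ge 2$.

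An alternative, closer to the remark preceding the proposition, is to imitate Lemma~\ref{lemma:simple} directly. One would first dispose of the case $\Op(t)=t$ by Lemma~\ref{lemma:reid}, and then, assuming $\Op(t)=t^{-1}$, show that $\vp'$ preserves the blocks $(S_n)_i\bigoplus(S_n)_{j-i}$. Granting that, on a block $\vp'$ is described by a single automorphism $\xi$ of $S_n$ via $g_ih_{j-i}\mapsto(\xi h)_i(\xi g)_{j-i}$, so $a_i(\xi a)_{j-i}$ is a non-trivial $\vp'$-fixed point for any non-trivial $a\in Fix(\xi^2)$. Such an $a$ always exists: $Out(S_n)$ is trivial for $n\ne 6$ and equals $\Z_2$ for $n=6$, so $\xi^2$ is inner in every case $n\ge 5$, and conjugation by $c$ fixes $c$ (and fixes everything if $c=1$). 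Then, exactly as in the proof of Theorem~\ref{thm:outer} (using \cite[Theorem~5]{FH}), each block contributes a $\vp'$-twisted conjugacy class distinct from $[1]$, and distributing such entries over infinitely many pairwise disjoint blocks gives $R(\vp')=\infty$; since $Fix(\Op)=\{1\}$, Lemma~\ref{lemma:reid}(2) yields $R(\vp)=\infty$.

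The hard part of this second route is the block-concentration, i.e.\ showing $\vp'\bigl((S_n)_0\bigr)=(S_n)_j$ rather than merely being a copy of $S_n$ spread over several indices. Lemma~\ref{lemma:+G-characteristic} and the structure of quasi-isometries of $DL(n,n)$ from \cite{EFW} only place $\vp'((S_n)_0)$ at height zero. To pin it down one must combine: Lemma~\ref{lemma:simple} applied to the simple group $A_n$ (which concentrates the image of $(A_n)_0$ at one index $j$); Lemma~\ref{lemma:ai} applied to the lamplighter quotient $S_n\wr\Z/\bigoplus(A_n)_i\cong\Z_2\wr\Z$ (which forces the ``sign'' of the image of a transposition to sit at a single index, necessarily $j$); and the triviality of $Z(A_n)$, via a commuting-subgroups argument that kills the leftover even-permutation components. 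The first route bypasses all of this, replacing it with the already-known property $\Rinf$ of $L_2$, and is the one I would write up.
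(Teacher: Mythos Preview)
Your first route is correct and is essentially the paper's own argument: both pass to the quotient $\Z_2\wr\Z$ via the short exact sequence $1\to\bigoplus(A_n)_i\to S_n\wr\Z\to\Z_2\wr\Z\to1$, use Hopficity of $L_2$ (via \cite{G}) so that Lemma~\ref{lemma:auto} applies, and finish with Lemma~\ref{lemma:reid}(1) and property $\Rinf$ for $L_2$. Your invocation of Lemma~\ref{lemma:commutator-center-char} (using $A_n=[S_n,S_n]$) to show $\bigoplus(A_n)_i$ is characteristic is in fact a cleaner shortcut than the paper's ad hoc verification, which adapts the projection argument of Lemma~\ref{lemma:simple}.
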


\begin{proof}
Since $A_n$ is a subgroup of index $2$ in $S_n$, we obtain the
following short exact sequence:
$$1 \rightarrow \bigoplus (A_n)_i \rightarrow S_n \wr \Z \rightarrow
\Z_2 \wr \Z \rightarrow 1$$
Since $\mathbb Z_2\wr \mathbb Z$ is finitely generated and residually finite by \cite[Theorem 3.2]{G}, it is Hopfian so that Lemma \ref{lemma:auto} applies. Therefore, if we can show that $\bigoplus (A_n)_i$
is a characteristic subgroup of $\bigoplus (S_n)_i$, then the
proposition follows from Lemma \ref{lemma:reid} and Theorem
\ref{thm:Ln}.

Following the proof of Lemma \ref{lemma:simple}, suppose that
$\vp'(G_0) \subset \cup_{k=1}^r G_{i_k}$.  When we compose with the
projection onto any of these $G_{i_k}$ factors, we obtain one of two
possible images for $G \cong S_n$: either $S_n$ or $\Z_2$, where the
latter arises when the kernel of the homomorphism is $A_n$.  In
either case, the image of $A_n$ under the composition of $\vp'$ and
this projection map must be $A_n$.  Now restrict the map $\vp'$ to
$\bigoplus (A_n)_i$ initially and it follows that $\bigoplus
(A_n)_i$ is a characteristic subgroup of $\bigoplus (S_n)_i$.
\end{proof}

The following corollary follows immediately from Proposition
\ref{prop:symmetric}, since any finite group can be embedded into
$S_n$ for some value of $n$.

\begin{corollary}
Every group of the form $G \wr \Z$ where $G$ is a nontrivial finite
group can be embedded into a group which has property $\Rinf$.
\end{corollary}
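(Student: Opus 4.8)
The plan is to combine Cayley's theorem with Proposition~\ref{prop:symmetric} and the functoriality of the wreath product in its base group. Given a nontrivial finite group $G$, set $m=|G|$ and let $n=\max\{m,5\}$. By Cayley's theorem $G$ embeds into the symmetric group $S_m$, and $S_m$ in turn embeds into $S_n$ as the pointwise stabilizer of $\{m+1,\dots,n\}$; composing, we obtain a monomorphism $\iota:G\hookrightarrow S_n$ with $n\geq 5$. The mild care needed here is precisely the choice $n=\max\{|G|,5\}$ rather than $n=|G|$, so that Proposition~\ref{prop:symmetric} will apply at the end.

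Next I would promote $\iota$ to an embedding of wreath products. Writing $G\wr\Z=\left(\bigoplus_i G_i\right)\rtimes\Z$ and $S_n\wr\Z=\left(\bigoplus_i (S_n)_i\right)\rtimes\Z$, define a map by applying $\iota$ coordinatewise on the base group $\bigoplus_i G_i$ and the identity on the acting copy of $\Z$. This is a homomorphism: the coordinate shift by which $\Z$ acts on $\bigoplus_i G_i$ commutes with applying $\iota$ in each coordinate, so $\varphi':=\bigoplus_i\iota$ and $\overline\varphi:=\mathrm{id}_\Z$ satisfy the compatibility condition given in Equation~\ref{eqn:compatibility}, and hence assemble to a homomorphism $G\wr\Z\to S_n\wr\Z$ via the construction in diagram~\eqref{short-exact}. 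It is injective because $\bigoplus_i\iota$ is injective on the base (a coordinatewise injection is injective), the identity is injective on $\Z$, and these two pieces respectively detect the elements of $G\wr\Z$ lying in the base and the image of an element in $\Z$; alternatively, the composite $G\wr\Z\to S_n\wr\Z\to\Z$ is the usual projection, and the kernel maps injectively into $\bigoplus_i (S_n)_i$. Thus $G\wr\Z$ embeds into $S_n\wr\Z$.

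Finally, since $n\geq 5$, Proposition~\ref{prop:symmetric} tells us that $S_n\wr\Z$ has property $\Rinf$, and $G\wr\Z$ is (isomorphic to) a subgroup of it. This proves the corollary. I do not anticipate a genuine obstacle: the argument is a short assembly of Cayley's theorem, the standard functoriality of $-\wr\Z$ in its first argument, and the already-established Proposition~\ref{prop:symmetric}; the only points worth stating explicitly are the enlargement of $n$ to be at least $5$ and the verification that a base-group monomorphism induces a wreath-product monomorphism.
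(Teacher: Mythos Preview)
Your proposal is correct and follows essentially the same approach as the paper: the paper simply notes that the corollary follows immediately from Proposition~\ref{prop:symmetric} together with Cayley's theorem embedding any finite group into some $S_n$. You have supplied the details the paper leaves implicit, in particular the enlargement to $n\geq 5$ and the verification that a base-group monomorphism extends to a wreath-product monomorphism.
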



\begin{thebibliography}{TWW}

\bibitem[B]{B}
R. Brown et al (eds.), Handbook of topological fixed point theory, Springer, 2005.

\bibitem[CT]{CT}
S. Cleary and J. Taback, Dead end words in lamplighter groups and
other wreath products, {\em Quarterly Journal of Mathematics} Volume
56 No. 2 (2005), 165-178.


\bibitem[EFW]{EFW}
A. Eskin, D. Fisher, and K. Whyte, Coarse differentiation of
quasi-isometries I: spaces not quasi-isometric to Cayley graphs,
preprint, 2007.

\bibitem[EFW2]{EFW2}
A. Eskin, D. Fisher, and K. Whyte, Coarse differentiation of
quasi-isometries II: Rigidity for Sol and Lamplighter groups,
preprint, 2007.

\bibitem[F]{F}
A.L. Fel'shtyn, The Reidemeister number of any automorphism of a
Gromov hyperbolic group is infinite, {\em Zap. Nauchn. Sem. POMI}
Vol. 279(2001), 229-241.

\bibitem[FG1]{FG}
A.L. Fel'shtyn and D. Gon\c calves, Twisted conjugacy classes in
Symplectic groups, Mapping class groups and Braid groups (including
an Appendix: Geometric group theory and $R_{\infty}$ property for mapping class groups, written with Francois Dahmani), preprint 2007.

\bibitem[FG2]{FGo}
A.L. Fel'shtyn and D. Gon\c calves, Twisted conjugacy classes of
automorphisms of Baumslag-Solitar groups, preprint 2004.

\bibitem[FH]{FH}
A.L. Fel'shtyn and R. Hill, The Reidemeister zeta function with
applications to Nielsen theory and a connection with Reidemeister
torsion, {\em K-Theory} 8 No. 4 (1994), 367-393.

\bibitem[GW1]{GW1}
D. Gon\c calves and P. Wong, Twisted conjugacy classes in wreath
products,  {\em Internat. J. Alg. Comput.} 16 (2006), 875-886.

\bibitem[GW2]{GW2}
D. Gon\c calves and P. Wong, Twisted conjugacy in exponential growth
groups,  {\em Bull. London Math. Soc.}, Vol. 35 (2003), 261-268.

\bibitem[GW3]{GW3}
D. Gon\c calves and P. Wong, Twisted conjugacy classes in nilpotent
groups, {\em J. Reine Angew. Math.}, to appear.

\bibitem[GZ]{GZ}
R. Grigorchuk and A. Zuk, The lamplighter group as a group generated
by a 2-state automaton, and its spectrum, {\em Geom. Dedicata}, 87
No. 1-3 (2001) 209–244.

\bibitem[G]{G}
K. Gruenberg, Residual properties of infinite soluble groups, {\em Proceedings London Math. Soc. (3)} 7 (1957), 29-62.

\bibitem[J]{J}
B. Jiang, Lectures on Nielsen Fixed Point Theory, Contemp. Math.
{\bf 14}, Amer. Math. Soc., Providence 1983.

\bibitem[L]{L} G. Levitt, On the automorphism group of generalized Baumslag-Solitar groups,
{\em Geometry and Topology} 11 (2007), 473--515.

\bibitem[LL]{LL}
G. Levitt and M. Lustig, Most automorphisms of a hyperbolic group
have simple dynamics, {\em Ann. Sci. Ecole Norm. Sup.} 33 (2000),
507-517.

\bibitem[MN]{MN}
Letter from R. Moeller to W.Woess, 2001.

\bibitem[TWh]{TWh}
J. Taback and K. Whyte, Twisted conjugacy and group actions,
preprint, 2005.

\bibitem[TWo]{TW}
J. Taback and P. Wong, Twisted conjugacy and quasi-isometry
invariance for  generalized solvable Baumslag-Solitar groups,
{\em Journal London Math. Soc. (2)} 75 (2007), 705-717.

\bibitem[TWo2]{TW2}
J. Taback and P. Wong, A note on twisted conjugacy and generalized Baumslag-Solitar groups, arXiv:math.GR/0606284, preprint 2006.

\bibitem[We]{Wecken2}
F. Wecken, Fixpunktklassen. III. Mindestzahlen von Fixpunkten,
(German) {\em Math. Ann.}  118 (1942), 544--577.


\bibitem[Woe]{Woe}
W. Woess, Lamplighters, Diestel-Leader graphs, random walks, and
harmonic functions, {\em Combinatorics, Probability $\&$ Computing}
14 (2005) 415- 433.

\bibitem[Wo]{Wo}
P. Wong, Reidemeister number, Hirsch rank, coincidences on
polycyclic groups and solvmanifolds, {\em J. Reine Angew. Math.},
524(2000), 185-204.

\bibitem[Wo2]{Wo2}
P. Wong, Fixed point theory for homogeneous spaces---a brief survey,
Handbook of topological fixed point theory,  265--283, Springer,
Dordrecht, 2005.

\bibitem[Wo3]{Wo3}
P. Wong, Fixed point theory for homogeneous spaces, II, {\em Fund.
Math.}  186  (2005),  no. 2, 161--175.

\bibitem[Wor]{Wor}
K. Wortman, A finitely presented solvable group with small
quasi-isometry group, Michigan Math. J. 55 (2007), no. 1, 3-24.

\end{thebibliography}
\end{document}